\documentclass[11pt]{amsart}
\usepackage{amssymb}
\usepackage{url}
\usepackage{hyperref}
\usepackage{xcolor}
\usepackage{graphicx}
\usepackage{mathrsfs}
\usepackage{comment}
\usepackage{relsize}

\newcommand*\oline[1]{%
  \vbox{%
    \hrule height .45pt%                 
    \kern0.35ex%                         
    \hbox{%
    \kern0.05em%                        
\ifmmode#1\else\ensuremath{#1}\fi%  
    \kern0.05em%                        
    }
  }
}

\newtheorem{theorem}{Theorem}[section]
\newtheorem{lemma}[theorem]{Lemma}

\newtheorem{proposition}[theorem]{Proposition}
\newtheorem{corollary}[theorem]{Corollary}
\theoremstyle{definition}

\theoremstyle{remark}
\newtheorem{remark}[theorem]{Remark}

\newcommand{\F}{\mathbb{F}}

\newcommand{\Z}{\mathbb{Z}}
\newcommand{\Q}{\mathbb{Q}}
\renewcommand{\P}{\mathbb{P}}
\newcommand{\C}{\mathbb{C}}

\newcommand{\G}{{\Gamma}}

\newcommand{\GL}{\mathrm{GL}}

\newcommand{\SL}{\mathrm{SL}}
\newcommand{\PSL}{\mathrm{PSL}}
\newcommand{\Sp}{\mathrm{Sp}}

\newcommand{\tr}{\mathrm{tr}}

\newcommand{\Ad}{\mathrm{Ad}}

\newcommand{\calc}{\mathscr{C}}
\newcommand{\cals}{\mathscr{S}}

\newcommand{\cha}{\mathrm{char}\hspace{2pt}}

\begin{document}

\title[Experimenting with Zariski dense matrix groups
over number fields]
{Algorithms for experimenting with Zariski dense matrix groups
over number fields}

\author{A.~S.~Detinko, D.~L.~Flannery, A.~Hulpke}

\makeatletter
\@namedef{subjclassname@2020}{\textup{2020} 
Mathematics Subject Classification}
\makeatother

\subjclass[2020]{20-04, 20G15, 20H25,
 22E40, 68W30}
\keywords{Linear group, Zariski density, strong approximation, 
 Bianchi group, algorithm}

\begin{abstract}
Let $\P$ be an algebraic number field. We provide a computational
 analog of the strong approximation theorem for finitely
generated Zariski dense groups $H\leq \SL(n,\P)$, $n$ prime.
That is, we present algorithms to find the set of congruence 
quotients of $H$ modulo all maximal ideals of a finitely
generated subring $R$ of $\P$ such that $H\leq \SL(n,R)$.
 The algorithms have been implemented in {\sf GAP}. 
 Potential applications are illustrated by a range of 
experiments in degree $2$, with a special focus 
on Bianchi groups.
\end{abstract}

\maketitle

\vspace{-20pt}

\section{Introduction}

This work is a further contribution to an expanding 
area of computational group theory; namely, computing 
with matrix groups over an infinite field $\F$. Our algorithms
take as input a finite subset of $\GL(n,\F)$. This generates 
a subgroup $H$ of $\GL(n,\F)$ such that $H\leq \GL(n, R)$ for 
some finitely generated integral domain $R\subseteq \F$. 
Since $H$ is residually finite~\cite[4.2]{Wehrfritz},
 it can be investigated efficiently via its finite quotients, in 
particular the congruence quotients arising from reduction
 modulo maximal ideals of $R$.

 A single congruence quotient of $H$ is enough to solve some decision
problems; e.g., testing finiteness~\cite{Recog}. Deeper study of $H$
requires knowledge of a plurality of its congruence quotients. Two 
questions are foremost: can we find all congruence quotients of $H$; 
how do the congruence quotients of $H$ determine its structure and properties?

 By way of the strong approximation property for linear groups 
(see \cite[Window~9]{LubotzkySegal} and \cite{RapinchukSAT,Weis}),
 we can describe the congruence quotients of
$H$ when $H$ is a dense subgroup of $\SL(n,\C)$.
(Here and from now on, `dense' means `Zariski dense'.)
Indeed, if $\F = \Q$ then the strong approximation
property implies that dense $H$ surjects onto $\SL(n,p)$
modulo almost all rational primes $p$. The congruence 
quotients of such $H$ modulo all maximal ideals of $R$ are 
therefore known once we have computed the finite set $\Pi(H)$ 
of primes modulo which $H$ and $\SL(n, R)$ are \emph{not} congruent. 

In \cite{ExpMath,SAT} we gave practical algorithms 
to compute $\Pi(H)$ for input dense $H \leq \SL(n, \Q)$:
a computational analog of strong approximation.
A hallmark of our methods is that they transfer 
much of the computing to matrix groups 
over finite fields. In this regard, the {\sf GAP} package 
\cite{NSrecog}, which realizes a categorization due to 
Aschbacher~\cite{Aschbacher84} of maximal subgroups of classical 
groups over finite fields, is a major component of our 
implementations.

Strong approximation not only ensures that
we can compute the congruence quotients of a dense group $H$; 
it implies that $H$ is well approximated by its congruence 
quotients. This affords a framework for examining the structure
of $H$. In \cite{Density} we developed algorithms that accept
 dense $H\leq \SL(n,\Z)$ and $\Pi(H)$ as input and compute the 
arithmetic closure of $H$, i.e., the intersection of all 
finite-index subgroups of $\SL(n, \Z)$ containing $H$. These 
algorithms facilitated the solution of problems for which no 
methods were previously available (e.g., 
see \cite[Section~4]{Density}).

Let $\P$ be an algebraic number field. Our primary goal in this 
paper is to extend the techniques of \cite{ExpMath,SAT} to handle 
finitely generated dense subgroups $H$ of $\SL(n, \P)$. 
Now $H\leq \SL(n,R)$ for $R\subseteq \mathcal{O}[1/\mu]=
\{ a/\mu^i \, | \, a \in \mathcal{O}, i \in \Z\}$ where 
$\mathcal{O} = \mathcal{O}_{\P}$ is the ring of integers of $\P$ 
and $\mu\in \Z$, $\mu >0$. Specifically, we obtain practical 
algorithms to describe the set of congruence quotients of $H$
modulo all maximal ideals $I$ of $R$. As with the algorithms over 
$\P=\Q$, our approach is based on reduction to computing in 
$\SL(n,R/I)$. So we continue to rely on Aschbacher's categorization 
of the maximal subgroups of $\SL(n, p^k)$, $p$ prime; see 
\cite{Brayetal} for an exhaustive treatment. Suppose that $\SL(n,R/I)$ 
and the congruence image $\widetilde{H}$ of $H$ modulo a maximal ideal 
$I$ of $R$ are different. Then $\widetilde{H}$ lies in a maximal 
subgroup of $\SL(n,R/I)$ belonging to some Aschbacher class $\calc_i$, 
$1 \leq i \leq 9$. The idea is to say, for each $i$, precisely when 
there can be a maximal subgroup in $\calc_i$ containing a congruence 
image of $H$.

We emphasize that the situation for groups over $\P$
 differs markedly from that for groups over $\Q$.
Firstly, if $\P \neq \Q$ then the image
field $R/I$ is not necessarily of prime size, and
 classes of maximal subgroups of $\SL(n,R/I)$
 that are not present in $\SL(n,p)$ emerge.
Another issue is that there might be infinitely 
many maximal ideals $I$ of $R\subseteq \mathcal{O}[1/\mu]$ 
such that dense $H\leq \SL(n,R)$ does not surject onto 
$\SL(n, R/I)$ modulo $I$. 
However, under certain conditions, $H$ \emph{will} surject onto 
$\SL(n, R/I)$ modulo all but finitely many maximal  ideals $I$. We 
give algorithms to decide whether $H$ satisfies those conditions; 
if it does, then we compute the (finite) set of maximal ideals 
$I\subset R$ such that $H$ does not surject onto $\SL(n, R/I)$.

Some of our procedures are valid for any degree
$n$. Others are restricted to prime $n$, taking advantage of 
the simplification of linear group structure in prime degree.
At the same time this suffices to cover several important 
applications; cf.~\cite[Section~4]{ExpMath} and see below.
Our approach for arbitrary $n$ entails computing with adjoint 
representations, and is dealt with in \cite{SLnO}.

We are motivated by applications---e.g., in algebraic number 
theory and geometry---that feature linear groups over number 
fields other than $\Q$, especially in small degrees. As a 
sample application of our algorithms, we consider subgroups $H$ of 
Bianchi groups; for these $n=2$ and 
$\P = \Q(\sqrt{-d}\hspace{.75pt})$, where $d\in \Z$ is positive 
and square-free. If $H$ is dense then we compute $\Pi(H)$ and 
test whether $H$ has finite index. Subsequently we decide
whether each finite-index $H$ is a congruence subgroup, i.e., contains 
the full kernel of a congruence homomorphism defined on 
$\SL(2, \mathcal{O}_{\P})$. This enables us to classify 
all subgroups of $\SL(2, \Z[\omega])$ of index up to $30$, for 
a primitive cube root of unity $\omega$, and find the congruence subgroups 
among them. We repeat the latter exercise to classify all normal 
subgroups of $\SL(2, \Z[\omega])$ of index up to $20000$. Our 
algorithms are the first to be developed for solving such problems.

The paper is organized as follows. In Section~\ref{CongHom} 
we explain how to compute congruence homomorphisms defined on 
 finitely generated subgroups of $\GL(n,\P)$. In Section~\ref{Asch}
 we determine when a congruence image of $H\leq \SL(n,\P)$ 
is in a subgroup from each  Aschbacher class.
In Section~\ref{StrongAT} we use a characterization 
of dense $H$ in terms of its congruence images to design an algorithm 
for computing congruence quotients of $H$ modulo maximal ideals 
of an ambient coefficient ring. Finally, in
Section~\ref{Exper} we discuss computer experiments with 
subgroups of Bianchi groups performed using a {\sf GAP} implementation of
the algorithms, that demonstrate the effectiveness 
and applicability of our methods.

\section{Constructing congruence homomorphisms} 
\label{CongHom}

We now outline how to construct congruence homomorphisms over $\P$. 
These reduce modulo (maximal) ideals of the ground domain $R$, so that
much of our computing is with matrix groups over finite fields.
Some of the material of this section may be found also in 
\cite[Section~3.2]{Recog}.

There exists $\alpha\in \mathcal{O}$ such that  
$\mathbb{P} =  \Q[\alpha] =  \Q(\alpha)$~\cite[p.~81]{Koch}.
Let $f(\mathrm{x})\in \Z[\mathrm{x}]$ be the minimal polynomial 
of $\alpha$; so $f(\mathrm{x})$ has degree $m:= |\P:\Q|$.  

As noted, if $H=\langle X\rangle$ for finite $X\subseteq \SL(n,\P)$, 
then $H\leq \SL(n,R_H)$ where $R_H$ is a finitely generated subring 
of $\P$. We can take $R_H$ to be generated by the entries of the 
elements of $X\cup X^{-1}$.  Hence we assume that $R_H\subseteq 
\allowbreak \Z[\alpha, 1/\mu] \subseteq \allowbreak R:=
\mathcal{O}[1/\mu]$ where $\mu\in \Z$, $\mu>0$. 
Although $R$ is a finitely generated Dedekind domain, 
$R_H$ need not be Dedekind. 

For an ideal $I$ of $R$, the natural epimorphism 
$R \rightarrow R/I$ extends entrywise to $\mathrm{Mat}(n,R)$. 
We denote both of these maps by $\varphi_{I}$; they are 
\emph{congruence homomorphisms} (modulo $I$).
Let $t_{i,j}(a)$ for $i\neq j$ be the 
element of $\SL(n,R)$ with $1$s down its main diagonal, $a\in R$ 
in position $(i,j)$, and zeros elsewhere.  
 For any field $\F$, $\SL(n,\F) = \langle
 t_{i,j}(a) : a\in \F\hspace{.5pt}\rangle$. 
Thus, if $I$ is a maximal ideal of $R$ then $\varphi_I$  maps 
$\SL(n, R)$ onto $\SL(n, p^k)$ for some $k \leq m$ and prime $p$ 
such that $R/I \cong \F_{p^k}$ ($\F_q$ is the field of size $q$).
Each congruence homomorphism $\varphi_I$ with image over a finite 
field restricts to one over $R_H$; the intersection of $R_H$ and 
a maximal ideal $I$ of $R$ is a maximal ideal of $R_H$. 

For a prime $p$ and $g(\mathrm{x})\in \allowbreak\Z[\mathrm{x}]$, 
let $\bar{g}(\mathrm{x})\in \allowbreak\F_p[\mathrm{x}]$ be the
polynomial whose coefficients are those of $g(\mathrm{x})$,
reduced modulo $p$. We have 
\begin{equation}
\label{Modpf}
\bar{f}(\mathrm{x})=\allowbreak
 \bar{f_1}(\mathrm{x})^{e_1} \cdots \bar{f_r}(\mathrm{x})^{e_r}, \  
 e_i\geq 1, 
\end{equation} 
where $\bar{f_j}(\mathrm{x})$ is the mod-$p$ reduction of the 
polynomial $f_j(\mathrm{x}) \in 
\allowbreak \Z[\mathrm{x}]$ that has the same coefficients as 
$\bar{f_j}(\mathrm{x})$ but viewed as integers. 
The $\bar{f_j}(\mathrm{x})$ are $\F_p$-irreducible. Choose a root 
$\bar{\alpha_j}$ of each $\bar{f_j}(\mathrm{x})$, $1\leq j \leq r$. 
The map $\varphi_j$ on $\Z[\alpha]$ given by
\begin{equation}
\label{PhionZalpha}
\sum_{i=0}^{m-1}c_i\alpha^i \mapsto
 \sum_{i=0}^{m-1}\bar{c_i}   \bar{\alpha_j}^{\hspace{-1pt} i}
\end{equation}
is a well-defined ring epimorphism from $\Z[\alpha]$ onto 
$\F_p( \hspace{.5pt} \bar{\alpha_j}) = \F_{p^{k_j}}$, where $k_j$ 
is the degree of $f_j(\mathrm{x})$. If $p\nmid \mu$ then 
$\varphi_j$ extends to $\SL(n,\Z[\alpha, 1/\mu])$ and so is 
defined on $\SL(n,R_H)$.

For $S=\{a_1, \ldots , a_m\} \subseteq\P$, let $\Delta(S)$ be the determinant 
of the  $m \times m$ matrix with $(i,j)$th entry $\tr_{\P/\Q}(a_ia_j)$, where 
$\tr_{\P/\Q}(a)\in \Q$ is the trace of $a \in \P$ with respect to $\P/\Q$. The set 
$S$ is a $\Q$-basis of $\P$ if and only if $\Delta(S) \neq 0$~\cite[p.~342]{Koch}.
 Let $S$ be an integral basis of $\P$; i.e., all $a_i$ are in $\mathcal O$ and 
$S$ is a $\Z$-basis of $\mathcal O$. Then $\Delta(S)\in \Z\setminus\{0\}$.
 Also $\Delta(S') = \Delta(S)$ for any other integral basis $S'$ of $\P$. Thus 
$\Delta(\mathcal{O}):= \Delta(S)$ is the (uniquely defined) discriminant
 of $\P$~\cite[pp.~44--45]{Koch}. 

Let $p$ be a rational prime not dividing $\Delta(\alpha)\Delta(\mathcal{O})^{-1}$,
where $\Delta(\alpha) := \Delta(\{1, \alpha, \ldots , \allowbreak \alpha^{m-1}\})$.
 By \cite[Theorem~3.8.2]{Koch}, the ideal $p\mathcal{O}$ factorizes as 
$I_1^{e_1} \cdots I_r^{e_r}$ where $e_j$ and $r$ are as in \eqref{Modpf}, and the 
$I_j$ are distinct prime (so maximal) ideals of $\mathcal{O}$. Of course, this 
factorization is essentially unique; the $e_j$ and $I_j$ are determined by $p$.
Indeed $I_j$ is generated by $p$ and $f_j(\alpha)$. Thus, for each $p$ we get $r$ 
congruence homomorphisms $\varphi_{I_j}\colon \allowbreak \mathrm{SL}(n, \mathcal{O}) 
\rightarrow \mathrm{SL}(n, p^{k_j})$ such that $\varphi_{I_j}$
on $\SL(n,\Z[\alpha])$ is equal to $\varphi_j$ as defined by \eqref{PhionZalpha}. 

Each non-zero element $a$ of the Dedekind domain $R$ 
lies in just finitely many maximal ideals of $R$. 
The set of such ideals will be denoted by $\pi(a)$. 
Our algorithms return sets of maximal ideals of $R$, 
each ideal being given by at most two generators. When $R$ 
is a PID (principal ideal domain),
 each output ideal is designated by a single generator. 

\section{Excluding Aschbacher classes} 
\label{Asch}

Let $H \leq \SL(n, R)$ where $R= \mathcal{O}[1/\mu]$, 
and let $I$ be a maximal ideal of $R$ such that 
$R/I \cong \allowbreak \F_{p^k}$.  By default $H$ is given 
by a finite generating set. If $\varphi_I(H) \neq \SL(n, p^k)$
then $\varphi_I(H)$ is contained in a maximal subgroup of
$\SL(n, p^k)$ belonging to at least one of the Aschbacher 
classes $\calc_i$, $1 \leq \allowbreak i \leq \allowbreak 9$, 
for $\SL(n, p^k)$.
 We adopt the definitions of the $\mathscr{C}_i$ from \cite{Brayetal}
(where $\calc_9$ is denoted by $\cals$), and call $G\leq \SL(n,p^k)$ 
a \emph{$\calc_i$-group} if $G\leq K$ for a maximal subgroup $K$ of 
$\SL(n,p^k)$ in $\calc_i$. Since an irreducible but not absolutely 
irreducible subgroup of $\SL(n,p^k)$ is a $\calc_3$-group, we may 
assume that $\calc_2, \mathscr{C}_5, \calc_6, \calc_8, \calc_9$ consist 
of absolutely irreducible groups. 

In this section we define sets $\Pi_i(H)$ of maximal ideals of $R$,
such that if $\varphi_I(H)$ is a $\calc_i$-group for a maximal ideal 
$I$ of $R$, then $I\in \Pi_i(H)$. We give algorithms to compute the 
$\Pi_i(H)$. Similar algorithms for $H\leq \SL(n,\Z[1/\mu])$ appear 
in \cite{ExpMath}. These carry over for $\P \neq \Q$ with minor 
changes; however, the case $i=5$ is entirely new.

Unless stated otherwise, the degree $n$ in Subsections~\ref{Reducible}, 
\ref{Semilinear}, \ref{Class5}, and \ref{Isometry} is arbitrary.
Eventually we restrict to prime $n$, thereby simplifying 
the categorization of maximal subgroups of $\SL(n,p^k)$. 
For example, if $n$ is prime then $\calc_2$-groups are monomial, 
$\calc_3$-groups are metacyclic,  and the orders of 
$\calc_6$-groups and $\calc_9$-groups (not contained in any other
$\calc_i$) are bounded by a function of $n$, independent of $p$.
Additionally $\calc_4\cup \calc_7=\emptyset$ for prime $n$, so
 these classes are not mentioned henceforward.

\subsection{Reducible groups: $\calc_1$} 
\label{Reducible}

If $H\leq \GL(n,R)$ is absolutely irreducible, then $\varphi_I(H)$ 
is absolutely irreducible for almost all maximal ideals $I$ of $R$. 
We find the set $\Pi_1(H)$ of maximal ideals $I\subset R$ such that 
$\varphi_I(H)$ is not absolutely irreducible as follows 
(cf.~\cite[Section~2.2]{ExpMath}).

Let $\F/\mathbb{E}$ be a finite-degree field extension, and let 
$X$ be a finite subset of $\mathrm{Mat}(n,\F)$. The procedure 
${\tt BasisEnvAlgebra} (X, \mathbb{E})$ returns a basis $A$
of the enveloping algebra $\langle  X\rangle_{\mathbb{E}}$; this is 
the smallest subalgebra of the $\mathbb{E}$-algebra $\mathrm{Mat}(n,\F)$
 containing $X$. That is, $\langle X\rangle_{\mathbb{E}}$ comprises 
all $\mathbb{E}$-linear combinations of elements of the monoid generated 
by $X$. Denote the $\mathbb E$-linear span of $S\subseteq\mathrm{Mat}(n,\F)$
by $\mathrm{span}_{\mathbb{E}}(S)$. In detail, $\tt BasisEnvAlgebra$ 
initializes $A$ at $\{ 1_n\}$, and while there are $a \in A$ 
and $x \in X$ such that $ax \notin \mathrm{span}_{\mathbb{E}}(A)$, updates
$A$ to $A\cup \{ax\}$.

If $\F = \P$ and $H=\langle X\rangle$ where $X=X^{-1}$, then we 
write  ${\tt BasisEnvAlgebra}(H, \mathbb{E})$. In this case, the 
basis of $\langle X\rangle_{\mathbb E} =\langle H\rangle_{\mathbb E}$
returned is a subset of $H$. Further, if $\mathbb{E}=\F=\P$ then 
we shorten the procedure name to ${\tt BasisEnvAlgebra}(H)$.

To compute $\Pi_1(H)$, we first run ${\tt BasisEnvAlgebra}(H)$ to 
 get a $\P$-basis $A = \{a_1, \ldots, a_{d}\}$ of 
$\langle H \rangle_{\P}$. If  $d = n^2$ then $H$ is absolutely 
irreducible; otherwise, we have detected that $H$ is not
absolutely irreducible. Let $\Delta(A)$ be the determinant of the 
$n^2 \times n^2$ matrix with $(i,j)$th entry $\tr(a_ia_j)$, where
 $\tr$ denotes matrix trace. The determinant of a matrix 
whose columns are the $a_i$ written as column vectors 
squares to $\pm \hspace{.25pt} \Delta(A)$. So for any ideal 
$I \not \in \pi(\Delta(A))$, the $\varphi_I(a_i)$ are $R/I$-linearly
independent, and $\varphi_I(H)$ is absolutely irreducible. 
On the other hand, $\varphi_I(H)$ might be absolutely irreducible for 
$I \in \pi(\Delta(A))$. This can be checked for every ideal in the 
finite set $\pi(\Delta(A))$. Hence we have a procedure 
${\tt PrimesForAbsIrreducible}(H)$ that returns $\Pi_1(H)$ for 
absolutely irreducible $H$ (the `primes' being prime ideals, i.e., 
maximal ideals of $R$).

As noted in \cite[Section~2.2]{ExpMath}, if $\varphi_I(H)$ is 
absolutely irreducible for some ideal $I$ of $R$ then $H$ is absolutely 
irreducible, and we get a basis of $\langle H \rangle_{\P}$ by lifting 
up to $\GL(n,R)$ the elements of a basis of 
$\langle \varphi_{I}(H)\rangle_{R/I}$. 

\subsection{Imprimitive groups: $\calc_2$} 
\label{Imp}
Let $n$ be prime and $H\leq \SL(n, R)$ be irreducible 
(over $\P$). Then either $H$ is primitive or it is monomial---i.e., 
conjugate to a group of monomial matrices. The procedure 
${\tt PrimesForMonomial}(H)$ is defined as follows 
(cf.~\cite[Section~2.3]{ExpMath}). First  $g, h \in H$ are chosen 
such that $[g^e,h^e]\neq 1_n$, where $e$ is the exponent
$\mathrm{exp}(\mathrm{Sym}(n))$ of the symmetric group 
$\mathrm{Sym}(n)$. Let $c$ be the product of the non-zero entries 
of $[g^e,h^e]-1_n$.
The procedure tests primitivity of $\varphi_I(H)$ for all $I \in \pi(c)$ 
such that $\varphi_I(H)$ is absolutely irreducible, using the 
{\sf GAP} package \cite{NSrecog}. Since  $\varphi_I(H)$ is non-monomial 
for all $I \notin \pi(c)$, ${\tt PrimesForMonomial}(H)$ returns 
 the set $\Pi_2(H)$ of all maximal ideals $I$ of $R$ for which 
$\varphi_I(H)$ is absolutely irreducible and monomial. 

If $H$ is not solvable-by-finite, then elements $g,h$ as above exist
 in $H$ (by the Tits alternative) and are 
ubiquitous~\cite[Section~2.3]{ExpMath}. The implemented algorithm finds
$g, h$ by random selection.

\subsection{Semilinear groups: $\calc_3$} 
\label{Semilinear}

Let $H\leq \SL(n,R)$ be non-solvable and $l$ be a positive 
integer. We define a procedure ${\tt PrimesForSolvable}(H, l)$ 
to compute all maximal ideals $I\subset R$ such that  
$\varphi_I(H)$ is a proper solvable subgroup of $\SL(n,R/I)$ 
with derived length less than or equal to $l$ 
(cf.~\cite[Section~2.4]{ExpMath}).
Let $H = H^{(0)} \geq H^{(1)} = [H,H] \geq \cdots$ be the 
 derived series of $H$. The procedure randomly 
selects a non-trivial iterated commutator 
$g\in H^{(l+1)}$, and sets $c$ to be the product of all 
non-zero entries of $g - 1_n$. If $I \notin \pi(c)$ then
$\varphi_I(H)$ has derived length greater than $l$. With the 
aid of \cite{NSrecog}, ${\tt PrimesForSolvable}(H, l)$ returns 
the subset of $\pi(c)$ consisting of all $I$ such that 
$\varphi_I(H)\lneq \SL(n,R/I)$ has derived length at 
most $l$. 

When $n$ is prime, subgroups of $\SL(n, p^k)$ in the semilinear 
class $\calc_3$ are metacyclic. Hence for prime $n$ and 
non-solvable  $H\leq \SL(n,R)$, the set $\Pi_3(H):= 
{\tt PrimesForSolvable}(H,2)$ contains all maximal ideals $I$ 
of $R$ such that $\varphi_I(H)$ is a $\calc_3$-group in
 $\SL(n, R/I)$.

\subsection{Subfield groups: $\calc_5$} 
\label{Class5}

By \cite[Table~2.8]{Brayetal}, $\calc_5$ in $\SL(n, p^k)$
comprises conjugates of groups of the form
\[
\SL(n, p^r) . \big[
\mathrm{gcd}
\big({\textstyle \frac{p^k-1}{p^r-1}}, n \big) 
\big] \quad \text{where} \ \, k=rs,  \ s \ \text{prime}.
\]
Here $A.B$ denotes an extension of $A$ by $B$, and $[u]$ denotes
a group of order $u$. If $G\leq \SL(n,p^k)$ is a $\calc_5$-group, 
then $[G,G]\leq \SL(n,p^r)$ for some $r$ strictly dividing $k$.
We give conditions under which the  congruence image of an 
absolutely irreducible subgroup of $\SL(n, R)$ is a 
$\calc_5$-group. 

For a set $M$ of square matrices over a commutative ring 
$S$ with $1$, $\tr(M):= \{\tr(x) \mid \allowbreak x\in M\}$.
Note that $\tr(\SL(n, S)) = S$.
\begin{lemma} 
\label{Prec5}
Let $H \leq \SL(n, R)$ and $I$ be a maximal ideal of $R$ 
such that $R/I \cong \F_{p^k}$, $k > 1$. Suppose that $H$ 
is absolutely irreducible and $I\not \in \pi(\Delta(A))$ 
for some basis $A$ of $\langle H\rangle_\P$. If 
$\tr(H)\subseteq \widehat{R}$ where $\widehat{R}$ is a 
subring (with $1$) of $R$ such that 
$\varphi_I(\widehat{R}) \neq \F_{p^k}$, then $\varphi_I(H)$ 
is a $\calc_5$-group.
\end{lemma}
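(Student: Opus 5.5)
The plan is to show that $G:=\varphi_I(H)$ is absolutely irreducible and that all its traces lie in a proper subfield of $\F_{p^k}$. A standard theorem on realizing absolutely irreducible groups over their trace field then conjugates $G$ into $\GL(n,p^r)$ for some $r$ strictly dividing $k$. From this we place $G$ inside a $\calc_5$ maximal subgroup.

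\emph{Absolute irreducibility and traces.} Since $I\notin\pi(\Delta(A))$, the argument of Subsection~\ref{Reducible} shows that the $\varphi_I(a_i)$ are $n^2$ linearly independent elements of $\mathrm{Mat}(n,R/I)$. Here $A$ can be taken inside $H$ (as returned by ${\tt BasisEnvAlgebra}(H)$); for a general basis, note that $\Delta(A)$ is nonzero modulo $I$, which still forces $\varphi_I$ of the $R$-span of $A$ to have full rank. Hence $\langle G\rangle_{R/I}=\mathrm{Mat}(n,\F_{p^k})$, so $G$ is absolutely irreducible.

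Next, $\varphi_I(\widehat R)$ is a subring of the finite field $\F_{p^k}$. A finite subring of a field is a field, so $\varphi_I(\widehat R)=\F_{p^r}$ for some proper divisor $r$ of $k$. Because traces commute with $\varphi_I$, we get $\tr(G)=\varphi_I(\tr(H))\subseteq\F_{p^r}$.

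\emph{Descent to the subfield.} Absolutely irreducible subgroups of $\GL(n,\F)$ over a finite field $\F$ are conjugate into $\GL(n,\F_0)$, where $\F_0$ is the field generated by their traces. This is the Brauer--Nesbitt/Schur-index fact: Brauer groups of finite fields are trivial, or equivalently one applies Wedderburn together with Galois descent. Hence, after conjugation, $G\leq \GL(n,p^r)\cap\SL(n,p^k)=\SL(n,p^r)$.

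\emph{Reaching a maximal $\calc_5$ subgroup.} Choose a prime $s$ dividing $k/r$ and set $r'=k/s$. Then $r\mid r'$, so $G\leq \SL(n,p^{r'})$. The latter lies in the $\calc_5$ subgroup $\SL(n,p^{r'}).[\gcd(\frac{p^k-1}{p^{r'}-1},n)]$ of \cite[Table~2.8]{Brayetal}.

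\emph{Main obstacle.} The delicate point is the maximality requirement built into the definition of a $\calc_i$-group. For small $n$ and $p^k$, \cite{Brayetal} lists exceptions where the $\calc_5$ subgroup is not maximal; for example, $\SL(2,p^{r'})$ may fail to be maximal when $p^k$ is tiny. The plan is to handle these by the convention in the paper's definitions: either the normalizer of $\SL(n,p^{r'})$ is contained in some maximal subgroup that is itself counted in $\calc_5$, or the exceptional cases are absorbed into another class that is treated separately. I expect this to be just a citation check, with the descent step being the only substantive argument.
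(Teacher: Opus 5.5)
This is essentially the paper's proof. Both arguments get absolute irreducibility of $\varphi_I(H)$ from $\Delta(A)\notin I$, put the traces in the proper subfield $\varphi_I(\widehat{R})=\F_{p^r}$, and conjugate into $\SL(n,p^r)$ by the finite-field descent result \cite[(9.14)]{IsaacsCT}. The paper stops there, tacitly reading ``$\calc_5$-group'' as ``conjugate into a subfield subgroup''.

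Two of your asides need correcting. First, the $\Delta(A)$ argument requires $A\subseteq H$, or at least $A\subseteq\mathrm{span}_R(H)$. For an arbitrary basis such as the matrix units, $\Delta(A)=\pm 1$, and this says nothing about $\varphi_I(H)$. Second, your first proposed fix for maximality fails in tiny cases: $\SL(2,2)<\SL(2,8)$ is absolutely irreducible, but its only maximal overgroup is the $\calc_3$ subgroup $D_{18}$. In such cases the conclusion holds only in the looser sense, which is harmless for the later use in Lemma~\ref{NotFull}.
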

\begin{proof}
We have $\varphi_I(\widehat{R}) =\F_{p^r}$, $r<k$, and hence 
$\tr(\varphi_I(H)) = \varphi_I(\tr(H)) \subseteq \F_{p^r}$.
Also $\varphi_I(H)$ is absolutely irreducible because 
$\Delta(A) \notin I$. Then $\varphi_I(H)$ is conjugate to a 
subgroup of $\SL(n, p^r)$ by \cite[(9.14)]{IsaacsCT}.
\end{proof}

Now $\mathcal{O}[1/\mu]$
 is finitely generated as an additive group if and only if $\mu =1$,
so for the rest of the subsection we assume that $R=\mathcal O$. 
Let $\Pi_5(H)$ be the set of maximal 
ideals $I\subset \mathcal O$ such that $\varphi_I(H)$ is a 
$\calc_5$-group. We show how to decide whether $\Pi_5(H)$ is finite.

 Lemma~\ref{Prec5} suggests the need for a procedure to compute 
the subring $\mathrm{Tr}(H):= \langle \tr(H),1\rangle_\Z$ of 
$\mathcal O$ generated by $\tr(H)$ and $1$. In what follows we 
describe such a procedure.
 Note that the subring $\langle H \rangle_{\Z}$ of 
$\mathrm{Mat}(n,\mathcal{O})$ generated by $H$ coincides with
 $\mathrm{span}_\Z(H)$, the $\Z$-linear span of $H$. Certainly 
$\mathrm{Mat}(n,\mathcal{O})$ and so $\langle H \rangle_{\Z}$ 
have finite additive rank. Thus a $\Z$-spanning set $B\subseteq H$ 
of $\langle H \rangle_{\Z}$ can be found by a simple modification 
of ${\tt BasisEnvAlgebra}(H,\mathbb{E})$ from 
Section~\ref{Reducible}, replacing $\mathbb{E}$ by $\Z$.
The while-loop now tests whether $ax\in \mathrm{span}_\Z(A)$
 using an Hermite normal form (HNF) algorithm (see, e.g., 
\cite[Section~9.3]{Handbook}) rather than linear algebra over 
$\Q$. 
Since the trace map is additive,  
$\mathrm{span}_\Z(\tr(B)) = \allowbreak \mathrm{span}_\Z(\tr(H))$.
A $\Z$-basis of $\mathrm{Tr}(H)$ may then be computed in 
$\mathcal{O}=\allowbreak \mathrm{Mat}(1,\mathcal O)$ 
by running $\tt BasisEnvAlgebra$ over $\Z$ with input $\tr(B)$.

A $\Z$-submodule of $\mathcal O$ is \emph{complete} if its 
$\Z$-rank is $m=|\P:\Q|$. So a subring (with $1$) of $\mathcal O$ 
is complete if and only if it has field of fractions $\P$.
\begin{lemma} 
\label{FullRank}
Suppose that $\mathrm{Tr}([H,H])$ is a complete $\Z$-submodule 
of $\mathcal{O}$ with $\Z$-basis $B$, and let $w$ be a change 
of basis matrix from $B$ to a $\Z$-basis $A$ of $\mathcal O$.
Then $\varphi_I(H)$ is not a $\calc_5$-group for all maximal 
ideals $I$ of $\mathcal{O}$ such that 
$I \notin \pi(\mathrm{det}(w)^2\Delta(A))$.
\end{lemma}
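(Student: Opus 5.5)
Suppose, for a contradiction, that $I\notin\pi(\det(w)^2\Delta(A))$ and that $\varphi_I(H)$ is a $\calc_5$-group in $\SL(n,p^k)$, where $\mathcal O/I\cong\F_{p^k}$. Then $k>1$, since $\calc_5$ is empty when $k=1$. The plan is to show that $\varphi_I(\mathrm{Tr}([H,H]))$ would then be a proper subfield of $\F_{p^k}$, and that this contradicts completeness of $\mathrm{Tr}([H,H])$ together with the hypothesis $I\notin\pi(\det(w)^2\Delta(A))$.

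\emph{Step 1: the traces reduce into a proper subfield.} By the remark after the description of $\calc_5$, $[\varphi_I(H),\varphi_I(H)]$ lies in a conjugate of $\SL(n,p^r)$ for some proper divisor $r$ of $k$. Since $\varphi_I([H,H])=[\varphi_I(H),\varphi_I(H)]$ and traces are conjugation invariant, we get
\[
\varphi_I(\tr([H,H]))\subseteq\F_{p^r}.
\]
Because $\varphi_I$ is a ring homomorphism and $\F_{p^r}$ is a subring, it follows that $\varphi_I(\mathrm{Tr}([H,H]))\subseteq\F_{p^r}\subsetneq\F_{p^k}$. Note that only this trace consequence is needed. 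Unlike in Lemma~\ref{Prec5}, we do not need absolute irreducibility here.

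\emph{Step 2: a complete order reduces onto the whole residue field.} Write $B=\{b_1,\dots,b_m\}$ and $A=\{a_1,\dots,a_m\}$ with $b_i=\sum_j w_{ij}a_j$, where $w$ is an integer matrix. The set $\varphi_I(A)$ spans $\mathcal O/I$ over $\F_p$, since $\varphi_I$ is surjective on $\mathcal O=\mathrm{span}_\Z(A)$.

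I want to show that $\varphi_I(B)$ also spans $\mathcal O/I$. It is enough to show that $\det(w)\not\equiv 0 \pmod p$, because then $w$ is invertible modulo $p$ and each $\varphi_I(a_j)$ is an $\F_p$-combination of the $\varphi_I(b_i)$.

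This follows from the hypothesis. Since $\det(w)\in\Z$ and $I\cap\Z=p\Z$, we have $\det(w)\in I$ if and only if $p\mid\det(w)$. The condition $I\notin\pi(\det(w)^2\Delta(A))$ gives $\det(w)^2\Delta(A)\notin I$, so in particular $\det(w)\notin I$, and hence $p\nmid\det(w)$.

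Consequently $\varphi_I(\mathrm{Tr}([H,H]))\supseteq\mathrm{span}_{\F_p}\varphi_I(B)=\F_{p^k}$, which contradicts Step 1.

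\emph{Main obstacle.} The one subtle point is the role of the hypothesis: the argument needs $\varphi_I(B)$ to span the residue field, and I use exactly $p\nmid\det(w)$ for this. The factor $\Delta(A)$ is superfluous for the contradiction itself. Presumably it is included because $\det(w)^2\Delta(A)=\Delta(B)$, which is the quantity the algorithm actually computes.

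I would also check that the \cite{Brayetal} description really forces $[G,G]$ into a subfield group up to conjugacy. This follows from the displayed structure $\SL(n,p^r).[u]$ with abelian top: every commutator lies in the normal subgroup $\SL(n,p^r)$, so the derived subgroup of any subgroup $G$ does as well.
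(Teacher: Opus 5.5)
Your proof is correct and follows the same argument as the paper. Both show that a $\calc_5$-image forces $\varphi_I(\tr([H,H]))$ into a proper subfield $\F_{p^r}$, whereas the hypothesis on $I$ forces $\varphi_I(\mathrm{Tr}([H,H]))$ to be the whole residue field.

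The only difference is how the second point is obtained. The paper uses the identity $\Delta(B)=\det(w)^2\Delta(A)$ and the fact that both discriminants lie outside $I$; its proof also writes the residue field as $\F_{p^m}$, which is accurate only when $p$ is inert. You use just $p\nmid\det(w)$ and work with $\F_{p^k}$. This is slightly sharper: it shows the factor $\Delta(A)$ is not needed, and it handles split primes correctly.
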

\begin{proof}
By \cite[(2.4.1), p.~43]{Koch}, $\Delta(B) = 
\mathrm{det}(w)^2\Delta(A)$; so that if $I \notin 
\pi(\mathrm{det}(w)^2\Delta(A))$ then $I\not \in 
 \allowbreak \pi(\Delta(A) ) \cup \allowbreak \pi( \Delta(B))$.
Hence for any rational prime $p$ in $I$, 
$\varphi_I(\mathrm{Tr}([H,H]))$ and $\varphi_I(\mathcal{O})$ 
both have $\F_p$-dimension $m$; i.e., 
$\varphi_I(\mathrm{Tr}([H,H])) = \F_{p^m}$. Consequently 
$\varphi_I(H)$ is not a $\calc_5$-group: if it were, then we 
would have $\varphi_I(\mathrm{tr}([H,H]))\subseteq \F_{p^r}$, 
$r < m$. 
\end{proof}
\begin{corollary} 
\label{Pi5}
If $\mathrm{Tr}([H,H])$ is a complete $\Z$-submodule of 
$\mathcal{O}$ then $\Pi_5(H)$ is finite.
\end{corollary}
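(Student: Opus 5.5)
The corollary should follow directly from Lemma~\ref{FullRank}, once we check that the set of ideals that lemma leaves out is finite. Suppose $\mathrm{Tr}([H,H])$ is complete. We will fix a $\Z$-basis $B$ of $\mathrm{Tr}([H,H])$ and a $\Z$-basis $A$ of $\mathcal{O}$, for instance an integral basis of $\P$. Let $w$ be the change of basis matrix from $B$ to $A$.

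Since $\mathrm{Tr}([H,H])\subseteq \mathcal{O}$ has $\Z$-rank $m$, the elements of $B$ are $\Z$-linear combinations of those of $A$. So $w$ is an $m\times m$ integer matrix, and it is nonsingular because $B$ is also $\Q$-linearly independent. Hence $\det(w)\in\Z\setminus\{0\}$. Also $\Delta(A)=\Delta(\mathcal{O})\in\Z\setminus\{0\}$, since $A$ is a $\Q$-basis of $\P$ (Section~\ref{CongHom}). Therefore $c:=\det(w)^2\Delta(A)$ is a nonzero element of $\Z\subseteq\mathcal{O}$.

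In the Dedekind domain $\mathcal{O}$, a nonzero element lies in only finitely many maximal ideals, so $\pi(c)$ is finite. Concretely, these are the prime ideals above the finitely many rational primes dividing $c$. By Lemma~\ref{FullRank}, $\varphi_I(H)$ is not a $\calc_5$-group for every maximal ideal $I\notin\pi(c)$. Hence $\Pi_5(H)\subseteq\pi(c)$, and $\Pi_5(H)$ is finite.

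There is no real obstacle here. The only point to watch is that $\det(w)\neq 0$, which is where completeness of $\mathrm{Tr}([H,H])$ is used. Without completeness, $B$ has fewer than $m$ elements, $w$ is not square, and the discriminant argument in Lemma~\ref{FullRank} breaks down.
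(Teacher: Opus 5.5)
Your proposal is correct and is exactly the argument the paper intends: the corollary is stated without a separate proof, as an immediate consequence of Lemma~\ref{FullRank}, since $\Pi_5(H)\subseteq\pi(\det(w)^2\Delta(A))$ and this nonzero integer lies in only finitely many maximal ideals of $\mathcal{O}$. Your check that $\det(w)\neq 0$ (using completeness) and $\Delta(A)\neq 0$ simply spells out what the paper leaves implicit.
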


\begin{lemma} 
\label{NotFull} 
Let $H$ be absolutely irreducible. If $\Pi_5(H)$ is finite then
$\mathrm{Tr}(H)$ is complete. 
\end{lemma}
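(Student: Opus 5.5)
We prove the contrapositive: if $\mathrm{Tr}(H)$ is not complete, then $\Pi_5(H)$ is infinite. Suppose $\widehat{R}:=\mathrm{Tr}(H)$ has $\Z$-rank less than $m$. Then its field of fractions $\mathbb{L}:=\Q\widehat{R}$ is a proper subfield of $\P$. Indeed, $\widehat{R}$ is a finitely generated $\Z$-module inside $\mathcal O$, so it is an order in $\mathbb{L}$, and $\mathrm{rank}_\Z \widehat{R} = |\mathbb{L}:\Q| < m$. Hence $\widehat{R}\subseteq \mathcal{O}_{\mathbb{L}}$ and $|\P:\mathbb{L}|=t>1$.

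The plan is to apply Lemma~\ref{Prec5} to infinitely many maximal ideals $I$ of $\mathcal O$. Fix a $\P$-basis $A$ of $\langle H\rangle_\P$; this exists since $H$ is absolutely irreducible. Then $\pi(\Delta(A))$ is finite. For a maximal ideal $I$ of $\mathcal O$ lying over a prime $\mathfrak{p}=I\cap\mathcal{O}_{\mathbb L}$ of $\mathcal{O}_{\mathbb L}$, we have
\[
\varphi_I(\widehat R)\subseteq \varphi_I(\mathcal{O}_{\mathbb L})=\mathcal{O}_{\mathbb L}/\mathfrak p,
\]
viewed as a subfield of $\mathcal O/I$. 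Thus $\varphi_I(\widehat R)\ne \F_{p^k}$ whenever the residue degree $f(I\mid\mathfrak p)$ is greater than $1$. In that case $k>1$ automatically, and Lemma~\ref{Prec5} gives that $\varphi_I(H)$ is a $\calc_5$-group provided also $I\notin\pi(\Delta(A))$. So it suffices to show that infinitely many primes $I$ of $\mathcal O$ have residue degree greater than $1$ over $\mathbb L$.

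This is the main step, and I would derive it from the Chebotarev density theorem. Let $N$ be the Galois closure of $\P/\mathbb L$, with group $G$, and let $U$ be the subgroup fixing $\P$, so $|G:U|=t>1$. Since a finite group is never the union of the conjugates of a proper subgroup, there is $\sigma\in G$ lying in no conjugate of $U$. Chebotarev gives infinitely many unramified primes $\mathfrak p$ of $\mathbb L$ whose Frobenius class is that of $\sigma$. For such $\mathfrak p$, the residue degrees of the primes of $\P$ above $\mathfrak p$ are the orbit lengths of $\langle\sigma\rangle$ on $G/U$. Since $\sigma$ fixes no coset, every prime $I$ above $\mathfrak p$ has $f(I\mid\mathfrak p)>1$.

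These infinitely many $I$ are distinct, and only finitely many of them lie in $\pi(\Delta(A))$. Therefore $\Pi_5(H)$ is infinite, contradicting the hypothesis. The routine checks are that $\varphi_I$ restricted to $\mathcal O_{\mathbb L}$ has image exactly the residue field of $\mathfrak p$, and that the rank of $\widehat R$ equals $|\mathbb L:\Q|$. The only nontrivial ingredient is the Chebotarev (or Jordan) argument above, and I expect it to be the main obstacle.
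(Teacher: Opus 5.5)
Your proof is correct. Its outline matches the paper's: argue the contrapositive, and apply Lemma~\ref{Prec5} with $\widehat R=\mathrm{Tr}(H)$ at infinitely many maximal ideals $I$. The two proofs differ in how those ideals are produced.

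The paper fixes a $\Z$-basis $A$ of $\mathcal O$ and observes only that, for $I\notin\pi(\Delta(A))$, the $\F_p$-dimension of $\varphi_I(\mathrm{Tr}(H))$ is less than $m$. That shows $\varphi_I(\mathrm{Tr}(H))\neq\mathcal O/I$ only when $|\mathcal O/I|=p^m$, i.e.\ when $p$ is inert. For $I$ of smaller residue degree it says nothing; for instance, when $\mathcal O/I=\F_p$ the image is trivially all of $\mathcal O/I$. Inert primes need not exist at all: $\Q(\sqrt{-1},\sqrt2)$ has Galois group $C_2\times C_2$ and so has none.

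Your argument supplies exactly what is needed for arbitrary $\P$. You pass to the fraction field $\mathbb L\subsetneq\P$ of $\mathrm{Tr}(H)$ and reduce to the condition $f(I\mid\mathfrak p)>1$. Chebotarev combined with Jordan's lemma then gives infinitely many such $I$. The paper's shortcut suffices only when infinitely many inert primes exist, as for the imaginary quadratic fields of Section~\ref{Exper}, where Dirichlet's theorem provides them. Frobenius's density theorem would also do in place of Chebotarev: if $\sigma$ lies in no conjugate of $U$, neither does any element generating a conjugate of $\langle\sigma\rangle$.

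Two minor corrections. Choose $A\subseteq H$, as ${\tt BasisEnvAlgebra}$ does, so that $\Delta(A)\in\mathcal O$. Also, absolute irreducibility is what makes $\Delta(A)\neq 0$, because the trace form on $\mathrm{Mat}(n,\P)$ is non-degenerate; it is not what makes a basis $A$ exist.
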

\begin{proof}
Suppose that  $\mathrm{Tr}(H)$ is not a complete $\Z$-module.
Let $A$ be a $\Z$-basis of $\mathcal O$.
 Then for any maximal ideal $I$ of $\mathcal O$ not in
$\pi(\Delta(A))$ and $p\in I$, the $\F_p$-dimension 
of $\varphi_I(\mathrm{Tr}(H))$ is less than $m$.
 Lemma~\ref{Prec5} implies that  $\Pi_5(H)$ is infinite.
\end{proof}

\begin{proposition}
\label{Pi5Infinite}
Suppose that $H$ is absolutely irreducible and $\tr(H)$ is 
contained in a proper subfield of $\P$. Then $\Pi_5(H)$ 
is infinite.
\end{proposition}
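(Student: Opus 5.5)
Let $\mathbb{L}\subsetneq \P$ be a proper subfield with $\tr(H)\subseteq \mathbb{L}$. Since $H\leq \SL(n,\mathcal O)$ (we are in the setting $R=\mathcal O$), $\tr(H)\subseteq \mathbb{L}\cap\mathcal{O}=\mathcal{O}_{\mathbb{L}}$. So $\mathrm{Tr}(H)\subseteq \mathcal{O}_{\mathbb{L}}$, whose $\Z$-rank is $|\mathbb{L}:\Q|<m$. Hence $\mathrm{Tr}(H)$ is not complete, and the contrapositive of Lemma~\ref{NotFull} gives that $\Pi_5(H)$ is infinite.

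If one prefers a more explicit argument, I would apply Lemma~\ref{Prec5} directly with $\widehat{R}=\mathcal{O}_{\mathbb{L}}$. Fix a basis $A$ of $\langle H\rangle_\P$; the set $\pi(\Delta(A))$ is finite. It then suffices to exhibit infinitely many maximal ideals $I$ of $\mathcal O$ with $\varphi_I(\mathcal{O}_{\mathbb{L}})\neq \mathcal{O}/I$.

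Take any maximal ideal $I$ lying over an unramified rational prime $p$ with $I\cap\mathcal{O}_{\mathbb L}=J$. The residue degree satisfies $f(I/p)=f(I/J)\,f(J/p)$, so we need $f(I/J)>1$. Such primes are infinite in number: by Chebotarev applied to the Galois closure of $\P$ (or more simply, primes $J$ of $\mathbb{L}$ that are inert or have residue extension of positive degree in $\P/\mathbb{L}$), there are infinitely many primes $J$ of $\mathcal{O}_{\mathbb L}$ admitting a prime $I$ above them with $f(I/J)>1$. This holds because $|\P:\mathbb{L}|>1$, and a nontrivial extension cannot have all but finitely many primes with all residue degrees $1$ unless it is totally split almost everywhere, which forces the extension to be trivial.

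For each such $I\notin\pi(\Delta(A))$, the image $\varphi_I(\mathcal{O}_{\mathbb L})=\mathcal{O}_{\mathbb L}/J$ is a proper subfield of $\mathcal{O}/I$. Lemma~\ref{Prec5} then shows that $\varphi_I(H)$ is a $\calc_5$-group. Note that $k>1$ holds automatically here.

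The only delicate point is the infinitude of primes with nontrivial residue extension, and the first route via Lemma~\ref{NotFull} sidesteps it entirely; that is the route I would present.
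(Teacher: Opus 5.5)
Your main route is the paper's proof: $\mathrm{Tr}(H)$ lies in a proper subfield (you bound its rank via $\mathcal{O}_{\mathbb{L}}$, the paper via its field of fractions), so it is not complete, and Lemma~\ref{NotFull} gives the result. One caution is that citing Lemma~\ref{NotFull} does not really sidestep the point you flag: that lemma's proof only shows $\varphi_I(\mathrm{Tr}(H))$ has $\F_p$-dimension less than $m$, which forces $\varphi_I(\mathrm{Tr}(H))\neq \mathcal{O}/I$ only when $|\mathcal{O}/I|=p^m$, i.e.\ for inert $p$, and such $p$ need not exist (e.g.\ for $\P=\Q(\sqrt{2},\sqrt{3})$), so your Chebotarev argument producing infinitely many $I$ with $f(I\mid J)>1$ is exactly what makes the argument airtight in general.
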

\begin{proof}
The field of fractions of $\mathrm{Tr}(H)$ is properly contained 
in $\P$. Hence $\mathrm{Tr}(H)$ is not complete. The result follows 
from Lemma~\ref{NotFull}.
\end{proof}

\begin{proposition}
\label{Pi5Finite}
Suppose that $[H,H]$ is absolutely irreducible and $\tr(H)$ is 
not contained in a proper subfield of $\P$. Then 
$\mathrm{Tr}([H,H])$ is complete and $\Pi_5(H)$ is finite.
\end{proposition}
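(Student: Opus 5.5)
By Corollary~\ref{Pi5}, it suffices to show that $\mathrm{Tr}([H,H])$ is complete. Equivalently, we must show that its field of fractions, namely the field $\Q(\tr([H,H]))$ generated by the traces of $[H,H]$, is all of $\P$. Write $N=[H,H]$ and $\mathbb{K}=\Q(\tr(N))\subseteq\P$. The plan is to show that $\tr(H)\subseteq\mathbb{K}$. The hypothesis that $\tr(H)$ lies in no proper subfield of $\P$ then forces $\mathbb{K}=\P$.

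First we exploit absolute irreducibility of $N$. Run ${\tt BasisEnvAlgebra}(N)$, as in Section~\ref{Reducible}. Since $N$ is absolutely irreducible, this yields $a_1,\dots,a_{n^2}\in N$ forming a $\P$-basis of $\mathrm{Mat}(n,\P)$. The trace form $(x,y)\mapsto\tr(xy)$ is nondegenerate, so the Gram matrix $T=(\tr(a_ia_j))$ is invertible. Its entries are traces of elements of $N$, hence lie in $\mathbb{K}$.

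Next take any $h\in H$ and write $h=\sum_i c_ia_i$ with $c_i\in\P$. For each $j$, multiply by $a_j$ and take traces:
\[
\tr(ha_j)=\sum_i c_i\tr(a_ia_j).
\]
The key point is that $\tr(ha_j)$ and $\tr(a_j h)$ lie in $\mathbb{K}$ even though $h\notin N$. To see this, use normality of $N$ in $H$. We have $a_j h = h\,(h^{-1}a_jh)$ with $h^{-1}a_jh\in N$, but this alone is not obviously enough. I would first try to show that the $\mathbb{K}$-span $\mathcal{A}=\mathrm{span}_{\mathbb{K}}(N)$ is a central simple $\mathbb{K}$-algebra with $\mathcal{A}\otimes_{\mathbb{K}}\P=\mathrm{Mat}(n,\P)$. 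Conjugation by $h$ preserves $N$, hence preserves $\mathcal{A}$. By Skolem--Noether, it is then induced by some $u\in\mathcal{A}^\times$. So $h=\lambda u$ for a scalar $\lambda\in\P^\times$, using that the centralizer of $\mathcal{A}$ in $\mathrm{Mat}(n,\P)$ consists of scalars by absolute irreducibility.

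This only gives $\tr(h)=\lambda\tr(u)$, which does not yet place $\tr(h)$ in $\mathbb{K}$, and this is the main obstacle. To handle it, I would compare $\tr(h)$ with its images under automorphisms. Let $\sigma$ be any embedding of $\P$ into $\C$ fixing $\mathbb{K}$. Apply the trace-determination argument to $\sigma(H)$ and $H$: the groups $N$ and $\sigma(N)$ have equal trace functions, so by Brauer--Nesbitt they are conjugate. Then $\sigma(h)$ and $h$ induce the same automorphism of $N$ up to this conjugation, so $\sigma(h)=\chi_\sigma(h)\,g h g^{-1}$ for some scalar $\chi_\sigma(h)$. Here $\chi_\sigma$ is a homomorphism $H\to\P^\times$ that is trivial on $N=[H,H]$.

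The remaining task is to control the abelian character $\chi_\sigma$, which must be an $n$th root of unity since $\det=1$. Because $n$ is prime, I expect one of two outcomes. Either $\chi_\sigma$ is trivial, giving $\sigma(\tr h)=\tr h$. Or $\chi_\sigma$ is nontrivial, and then the traces of elements in nontrivial cosets can be rescaled by roots of unity. In the latter case I would need an extra argument showing that $\mathrm{Tr}([H,H])$ still has full rank in $\P$. I flag this twisting by a character of $H/[H,H]$ as the delicate point of the proof.
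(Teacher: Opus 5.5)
\textbf{The proposition as stated is false, and your twisting step is where it breaks.} The gap you flag at the end cannot be closed, because the twist by a character of $H/[H,H]$ really occurs. Your intermediate target $\tr(H)\subseteq\mathbb{K}$ is therefore false in general. Your ``key point'' that $\tr(ha_j)\in\mathbb{K}$ already fails for $a_j=1_n$.

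Your Skolem--Noether step shows exactly what can happen. Each $h\in H$ has the form $h=\lambda_h u_h$ with $u_h\in\mathcal{A}^\times$, $\lambda_h\in\P^\times$ and $\lambda_h^n=\det(u_h)^{-1}\in\mathbb{K}$. Since $\tr(u_h)\in\mathbb{K}$, this gives $\P=\mathbb{K}(\lambda_h : h\in H)$. So $\P$ is generated over $\mathbb{K}$ by $n$th roots of elements of $\mathbb{K}$, and nothing forces $\mathbb{K}=\P$.

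Concretely, let $\P=\Q(\mathrm{i})$ and
\[
h=\begin{bmatrix}\mathrm{i}&\mathrm{i}\\ \mathrm{i}&0\end{bmatrix}=\mathrm{i}\,u,
\qquad u=\begin{bmatrix}1&1\\ 1&0\end{bmatrix}\in\GL(2,\Z),
\qquad H=\langle \SL(2,\Z),h\rangle\le\SL(2,\Z[\mathrm{i}]).
\]
Conjugation by $h$ is conjugation by $u$, and $h^2=-u^2\in\SL(2,\Z)$. Hence $|H:\SL(2,\Z)|=2$, and $[H,H]$ is a finite-index subgroup of $\SL(2,\Z)$. So $[H,H]$ is absolutely irreducible (indeed $H$ is dense), and $\tr(h)=\mathrm{i}\notin\Q$, the only proper subfield of $\P$. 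Yet $\mathrm{Tr}([H,H])=\Z$ is not complete.

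Moreover, take any rational prime $p\equiv 3\pmod 4$. The image of $H$ modulo $p\Z[\mathrm{i}]$ normalizes $\SL(2,p)$ in $\SL(2,p^2)$, so it lies in the $\calc_5$-subgroup $\SL(2,p).2$. Hence $\Pi_5(H)$ is infinite. In your Galois language, complex conjugation $\sigma$ gives $\sigma(h)=-h$, i.e.\ $\chi_\sigma(h)=-1$. For $n=3$, even $H=\SL(3,\Z)\times\langle\omega 1_3\rangle\le\SL(3,\Z[\omega])$ is a counterexample.

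\textbf{The paper's proof has the same gap.} It asserts $\langle H\rangle_\Q=\langle[H,H]\rangle_\Q$ without justification. That equality is essentially equivalent to the conclusion, and it fails for the $H$ above, where $\langle[H,H]\rangle_\Q=\mathrm{Mat}(2,\Q)\neq\mathrm{Mat}(2,\P)$.

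The correct statement puts the hypothesis on the commutator subgroup. Suppose $[H,H]$ is absolutely irreducible and $\tr([H,H])$ lies in no proper subfield of $\P$. Then the field of fractions $\Q(\tr([H,H]))$ of $\mathrm{Tr}([H,H])$ is $\P$, so $\mathrm{Tr}([H,H])$ is complete, and $\Pi_5(H)$ is finite by Corollary~\ref{Pi5}. This condition can be tested with the same machinery, since $\tt PrimesForSubfields$ computes a $\Z$-basis of $\mathrm{Tr}([H,H])$ anyway.
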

\begin{proof}
We have $\mathrm{Mat}(n,\P)=\langle H \rangle_\Q =
 \langle [H,H] \rangle_\Q$. Since a basis of 
$\langle [H,H] \rangle_\Q$ may be chosen in $[H,H]$, 
the set $\tr([H,H])$ is not contained in a proper 
subfield of $\P$. Let $\F \subseteq \P$ be the field of 
fractions of $\mathrm{Tr}([H,H])$. If $\mathrm{Tr}([H,H])$ 
is not complete then $|\F : \Q| <m$, a contradiction.
By  Corollary~\ref{Pi5}, this gives the result.
\end{proof}

Since $\langle \mathrm{Tr}(H)\rangle_\Q= \langle B\rangle_\Q$ for any 
 $\Z$-spanning set $B$ of $\mathrm{Tr}(H)$, we see that $\tr(H)$ is 
contained in a proper subfield of $\P$ if and only if 
$\langle B\rangle_\Q$ has $\Q$-dimension less than $m$. So
$\tt BasisEnvAlgebra$ can be used to decide the criterion of 
Propositions~\ref{Pi5Infinite} and \ref{Pi5Finite}.

Assuming that $[H,H]$ is  absolutely irreducible, the preceding 
furnishes a procedure ${\tt Primes}$-${\tt ForSubfields}(H)$
 to decide whether $\Pi_5(H)$ is finite, returning $\Pi_5(H)$ 
in the event that this set is finite. We begin by testing
whether $\tr(H)$ lies in a proper subfield of $\P$. If it does, 
then we stop, reporting that  $\Pi_5(H)$ is infinite 
(Proposition~\ref{Pi5Infinite}). Otherwise, by Lemma~\ref{FullRank}
we compute $\Delta(B)$ for a $\Z$-basis $B$ of $\mathrm{Tr}([H,H])$.
Then $\Pi_5(H)$ is the set of $I \in \pi(\Delta(B))$ such that 
$\varphi_I(H)$ is a $\calc_5$-group; the latter checked by means of
\cite{NSrecog} as usual.

To find a $\Z$-basis $B$ of $\mathrm{Tr}([H,H])$, we first compute 
a $\Z$-basis $A\subseteq [H,H]$ of $\langle [H,H]\rangle_\Z$. 
This can be done using $\tt BasisEnvAlgebra$ as in the 
discussion before Lemma~\ref{FullRank}, and a modification of the 
procedure ${\tt BasisAlgebraClosure}(K,X)$ defined in 
\cite[Section~3]{TitsDFO}.
Here $X$ is a generating set of $H$ such that $X= X^{-1}$, and 
$K=[X,X]=\{ \hspace{.5pt} [x,y] \mid x,y\in X\}$. Note that $K$  
is a `normal generating set' for $[H,H]$, i.e., $[H,H]$ is the 
normal closure of $K$ in $H$. $\tt BasisAlgebraClosure$ 
initializes $A'$ to $K$, and its while-loop tests whether 
$a^x$ is in $\mathrm{span}_\Z(A')$ for all $a\in A'$ and $x\in X$, 
updating $A'$ to $A'\cup\{a^x\}$ if not.
Bases of $\Z$-modules are again computed via an HNF algorithm. The 
desired $A$ is any $\Z$-basis of $\langle A'\rangle_{\Z}$ 
(for $A'$ at completion of the while-loop). Then $B$ is found by 
another call on $\tt BasisEnvAlgebra$, with input $\tr(A)$.

\subsection{Bounded orders: $\calc_6$ and $\calc_9$} 
\label{class6}

Let $H$ be infinite, and $s$ be a positive integer. 
We proceed as in \cite[Section~2.1]{ExpMath} to determine all 
maximal ideals $I$ of $R$ such that $\varphi_I(H)$ has elements 
of order greater than $s$.
This leads to algorithms that are used to exclude $\calc_6$-groups 
and $\calc_9$-groups as congruence images.

Since it is finitely generated, $H$ must have an element  
of infinite order~\cite[4.9]{Wehrfritz}. In practice we often 
locate $h\in H$ of infinite order quickly just by random 
selection. For $1 \leq i \leq s$, let $m_i$ be the product of 
the non-zero entries of $h^i-1_n$, and set $l=m_1\cdots m_s$.
If $I$ is a maximal ideal of $R$ not in $\pi(l)$, then 
$\mathrm{exp}(\varphi_I(H))>s$. Using \cite{NSrecog} 
we can find all $I \in \pi(l)$ such that 
$\mathrm{exp}(\varphi_I(H)) \geq s$. Hence we get a procedure 
${\tt PrimesForOrder}(H, s)$ that accepts infinite $H\leq \SL(n, R)$
and a positive integer $s$, and returns the set of maximal ideals 
$I$ of $R$ for which $\mathrm{exp}(\varphi_I(H)) < s$.

We now consider groups in $\calc_6$. See \cite[Section~2.2.6]{Brayetal} 
for a definition of this class, which is nonempty only when $n$ is a 
power of a prime different from $p$.
\begin{lemma} 
\label{Cee6}
$\calc_6$-groups in $\SL(n, p^k)$ have orders bounded by a function of $n$ 
only, independent of $p$ and $k$.
\end{lemma}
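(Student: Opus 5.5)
We will argue directly from the structure of $\calc_6$ as described in \cite[Section~2.2.6]{Brayetal} and \cite[Table~2.9]{Brayetal}. A maximal subgroup $K$ of $\SL(n,p^k)$ in $\calc_6$ exists only when $n=r^a$ for a prime $r\neq p$. It normalizes an absolutely irreducible extraspecial-type group $E$, which is either $r^{1+2a}$ (for $r$ odd) or $4\circ 2^{1+2a}$ or $2^{1+2a}_{\pm}$ (for $r=2$). Moreover $K$ has the shape $Z\circ E$ extended by a subgroup of $\Sp(2a,r)$ (or of $\mathrm{O}^{\pm}(2a,2)$), where $Z=Z(\SL(n,p^k))$ consists of scalars. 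It therefore suffices to bound $|K|$, since every $\calc_6$-group is contained in such a $K$.

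The steps run as follows. First, $|E|\leq 4r^{1+2a}\leq 4n^3$. Second, $|Z|=\gcd(n,p^k-1)\leq n$. Third, $K/(ZE)$ embeds in the outer action on $E/Z(E)\cong \F_r^{2a}$ preserving the commutator form, so $|K/(ZE)|\leq |\GL(2a,r)|\leq r^{4a^2}=n^{4a}\leq n^{4\log_2 n}$. Combining these gives $|K|\leq 4n^{4}\cdot n^{4\log_2 n}$. This is a function of $n$ alone, as claimed.

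The one point that needs care is justifying the structural description of $K$, in particular that $C_K(E)$ consists only of scalars. If we want to avoid quoting the table, we can argue this directly. Since $E$ is absolutely irreducible, Schur's lemma gives $C_{\GL(n,p^k)}(E)=$ scalars, and the scalars in $\SL$ number at most $n$. Then $K/C_K(E)$ embeds in $\mathrm{Aut}(E)$, which is finite of order bounded in terms of $|E|\leq 4n^3$ (for instance by $|E|^{\log_2|E|}$, as a generating set of size at most $\log_2|E|$ determines an automorphism). That already yields a bound depending only on $n$. This fallback argument is the one I would write, citing \cite{Brayetal} only for the fact that $K=N_{\SL(n,p^k)}(E)$ with $E$ absolutely irreducible of order at most $4n^3$.
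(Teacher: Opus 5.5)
Your proposal is correct and takes essentially the paper's approach: the paper's proof is just the remark that the bound is clear from \cite[Table~2.9]{Brayetal}. Your estimate $|K|\leq 4n^{4}\cdot n^{4\log_2 n}$, and the fallback using Schur's lemma together with the bound $|\mathrm{Aut}(E)|\leq |E|^{\log_2|E|}$, simply spell out that inspection. The fallback also needs only $K\leq N(E)$, with $E$ absolutely irreducible of order at most $4n^3$.
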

\begin{proof}
This is clear from \cite[Table~2.9]{Brayetal}. 
\end{proof}

Let $\Pi_6(H)$ be the set of maximal ideals $I$ of $R$ such that 
$\varphi_I(H)$ is a $\calc_6$-group. By Lemma~\ref{Cee6},
 $\Pi_6(H)$ is finite. This set is found by running
${\tt PrimesForOrder}(H, s)$, where $s$ is a bound on the exponents
of all $\calc_6$-groups. Values for $s$ in small prime degrees  
(extracted from \cite[Chapter 8]{Brayetal}) are given in 
Section~\ref{StrongAT}.

We turn next to non-geometric $G\leq \SL(n,p^k)$.
By definition, $G\in \calc_9$ is almost simple modulo scalars, i.e., 
$K\unlhd G/Z\leq \mathrm{Aut}(K)$
for some non-abelian simple group $K$ such that $C_{G/Z}(K)$
is trivial, where $Z$ denotes the scalar subgroup of $G$. 
\begin{theorem}[{cf.~\cite[Lemma~3.1]{ExpMath}, \cite[Theorem~2.3]{SAT}}] 
\label{ZalesskiCee9}
Let $q$ be a prime and $\F$ be a finite field. The orders of proper 
subgroups of $\hspace{.5pt} \SL(q,\F)$ contained in $\mathscr{C}_9$ and 
not in any other $\mathscr{C}_i$ are bounded above by a function of $q$ 
that does not vary with $\cha \F$.
\end{theorem}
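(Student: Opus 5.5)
We would deduce the theorem from the classification of almost simple groups with an absolutely irreducible representation of small degree, in the form used in \cite[Lemma~3.1]{ExpMath} and \cite[Theorem~2.3]{SAT}. The point is that \cite{ExpMath,SAT} treat $\SL(q,p)$, and the extension to $\SL(q,p^k)$ needs the same argument made uniform in $k$ as well as in $p$.

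Let $G\lneq \SL(q,\F)$ be a $\calc_9$-group not lying in any other $\calc_i$, and let $Z$ be its scalar subgroup. Then $|Z|\le q$, since scalars in $\SL(q,\F)$ are $q$th roots of unity. Also $K\unlhd G/Z\le \mathrm{Aut}(K)$ with $K$ non-abelian simple. Let $L$ be the last term of the derived series of the preimage of $K$ in $G$. Then $L$ is quasisimple, $L/Z(L)\cong K$, and $L$ acts absolutely irreducibly on $\F^q$, because $\calc_9$-groups are absolutely irreducible and $L\unlhd G$ with $q$ prime. Since $|G/Z|\le |\mathrm{Aut}(K)|$ and $|\mathrm{Out}(K)|$ is bounded by a polynomial in $\log|K|$, it suffices to bound $|K|$ by a function of $q$ alone.

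We split according to the isomorphism type of $K$, using CFSG.
\begin{enumerate}
\item If $K$ is alternating or sporadic, the minimal degree of a nontrivial projective representation of $K$ in any characteristic grows with $|K|$. For example, $\mathrm{Alt}(c)$ has minimal degree at least $c-2$ for $c\ge 9$. So $q$ bounds $|K|$ directly.
\item If $K$ is of Lie type in cross characteristic $r\ne p$, the Landazuri--Seitz bounds give a minimal degree tending to infinity with $|K|$. So again $|K|$ is bounded in terms of $q$.
\item If $K$ is of Lie type in defining characteristic $p$, say over $\F_{p^e}$, this is the case requiring care. By Steinberg's tensor product theorem, an absolutely irreducible representation of prime dimension $q$ is, up to Frobenius twist, restricted. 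Any non-restricted representation factors as a tensor product of twists, which forces composite dimension unless one factor is trivial. The Lie rank of $K$ is at most $q$, because a faithful representation needs dimension at least the rank plus one.
\end{enumerate}

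In the defining-characteristic case, the main obstacle is excluding the following situation: $\F_{p^e}$ is the field generated by the traces, and the representation is realised over that field. Then $L$ is conjugate to a classical group $\SL(q,p^e)$, $\mathrm{SU}(q,p^{e/2})$ or $\mathrm{SO}(q,p^e)$. By the definition of $\calc_9$ in \cite{Brayetal}, such groups are not in $\calc_9$ because they are classical over a subfield. So $G$ lies in $\calc_5$ or $\calc_8$, or equals $\SL(q,\F)$. This contradicts the hypothesis that $G$ is a proper subgroup not in any other $\calc_i$.

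The remaining defining-characteristic possibilities are the genuinely non-classical embeddings, for example $\mathrm{SL}_2$ via $\mathrm{Sym}^{q-1}$, $G_2$ in dimension $7$, and other restricted modules of prime dimension. For these, the order of $G$ grows with $p^e$, and no uniform bound can hold. Here we would invoke the condition, built into \cite[Table~8.x]{Brayetal}, that the maximal $\calc_9$-subgroups of $\SL(q,p^k)$ of Lie type in characteristic $p$ occur only with specific restrictions. We would then check, from the lists in \cite[Chapter~8]{Brayetal} for prime $q$ and via the general argument in \cite{SAT}, that each such subgroup has order bounded independently of the characteristic. Verifying this list-based reduction uniformly for all prime $q$, rather than for small $q$ only, is where we expect the real difficulty. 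We would first try to follow the proof of \cite[Theorem~2.3]{SAT} and check that it nowhere uses that the field has prime order.
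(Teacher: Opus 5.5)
Your reduction to the quasisimple layer $L$ and $K=L/Z(L)$, and your treatment of alternating, sporadic and cross-characteristic $K$, agree with the paper's proof. The gap is the case where $K$ is of Lie type in the defining characteristic $p$, which is the real content of the theorem. There your argument is incomplete, and it is also inconsistent. You assert that for the ``genuinely non-classical'' embeddings the order grows with $p^e$ and ``no uniform bound can hold''. You then propose to check that each such subgroup has order bounded independently of $p$.

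In fact your examples are excluded by the hypothesis. For odd $q$, a self-dual absolutely irreducible $q$-dimensional module carries a nondegenerate invariant bilinear form, and that form must be symmetric because $q$ is odd. So $\SL_2$ acting on $\mathrm{Sym}^{q-1}$ lies in $\mathrm{SO}_q$, and $G_2$ on its $7$-dimensional module lies in $\mathrm{SO}_7$. Both therefore lie in members of $\calc_8$. Similarly, your claim that realizability over the trace field makes $L$ conjugate to $\SL(q,p^e)$, $\mathrm{SU}(q,p^{e/2})$ or $\mathrm{SO}(q,p^e)$ holds only for natural modules.

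What the argument actually needs is a bound, uniform in $p$, on the orders of groups of Lie type in characteristic $p$ that embed in $\SL(q,\F)$ as $\calc_9$-groups lying in no other class. Self-duality disposes of some modules. You would still have to deal with the remaining restricted modules of prime dimension and show they cannot occur for large $p$. The existence of this bound is exactly the main result of \cite{ZalesskiPreprint}, which the paper invokes; it depends essentially on $q$ being prime and on $G$ lying only in $\calc_9$. Nothing in your plan supplies it.

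Neither of your fallbacks closes the gap.
\begin{itemize}
\item The tables in \cite[Chapter~8]{Brayetal} stop at dimension $12$. So they settle $q\le 11$, as the paper remarks, but not arbitrary prime $q$.
\item The paper explicitly says its earlier prime-field argument \cite[Lemma~3.1]{ExpMath} cannot be reused here. That argument depends on lifting $K$ to characteristic zero, which is unavailable when $p$ divides $|K|$, and that is exactly the defining-characteristic situation. So checking that a prime-field proof ``nowhere uses that the field has prime order'' should be expected to fail at precisely this step.
\end{itemize}
Finally, your target of bounding $|K|$ by a function of $q$ alone is too strong. The paper points out that the bound, although independent of $p$, can depend on $k$ where $|\F|=p^k$; it cites an example with $q=71$ from \cite{ZalesskiPreprint}. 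The correct goal is a bound uniform in $\cha\F$ for fixed $k$.
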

\begin{proof}
Let $G$ be a maximal subgroup of $\SL(q,\F)$ in $\mathscr{C}_9$ such that 
$G\not \in \mathscr{C}_i$ for $i\neq 9$. Now $U= G^\infty$ 	(the intersection
 of all terms in the derived series of $G$) is a quasisimple normal subgroup 
of $G$. A bound on $|U|$ of the kind claimed implies the same
kind of bound on $|G|$, as $G/Z(G)$ embeds in $\mathrm{Aut}(U)$.  
Set $K= U/Z(U)$.

If $K$ is sporadic then of course $|K|$ is bounded absolutely.

If $K\cong \mathrm{Alt}(k)$ then $k\leq (3q+6)/2$ by 
\cite[Proposition~10, p.~333]{LubotzkySegal}, so $|K|$ is
bounded by a function of $q$ only, independent of $\F$.

Suppose that $K$ is a group of Lie type $X_l(r^e)$ in cross characteristic 
$r\neq p:= \cha \F$. Then \cite[Table~1]{SeitzZalesski} gives the smallest 
degree for which $K$ has a faithful projective representation over a 
field of characteristic $p$. These degree minima increase with 
the Lie rank $l$ and $r^e$, but are independent of $p$. Hence, for each $q$ 
and all $\F$, there are only finitely many possibilities for $|K|$.

This leaves the option that $K=X_l(p^e)$ is a group of Lie type in defining
 characteristic $p$. Existence of the required function for such $K$ is 
the main result of \cite{ZalesskiPreprint},
which crucially relies on the degree $q$ being prime---unlike 
the previous cases---and $G$ lying solely in $\mathscr{C}_9$. 
(Note also that we cannot use our proof of 
\cite[Lemma~3.1]{ExpMath} here, as $p$ could divide $|K|$, and so 
$K$ need not lift to characteristic zero.)
\end{proof}

More explicitly, Theorem~\ref{ZalesskiCee9} can be verified up to large 
values of $q$ using data 
 available in \cite{Lubeck,Lubeck2}. For $q\leq 11$,  the theorem is
also evident by inspection of  relevant tables in \cite{Brayetal};
this is more than enough for our experiments.

The order bound 
given by Theorem~\ref{ZalesskiCee9} does not vary
with $p$. However, it can depend on $k$ (see the example for $q=71$ in 
\cite[Section~7]{ZalesskiPreprint}).
In our context $k\leq |\P:\Q|$ will be fixed.
We reiterate that the prime degree hypothesis 
is indispensible; e.g., there are subgroups of $\SL(6,p^k)$
that are central extensions of $\SL(3,p^k)$, which belong to 
$\mathscr{C}_9$ and to no other 
$\mathscr{C}_i$~\cite[p.\hspace{1.5pt}389]{Brayetal}.

Let $\Pi_9(H)$ be the set of maximal ideals  $I \subset R$ 
such that $\varphi_I(H)$ is a $\calc_9$-group.
Then $\Pi_9(H)$ is finite by Theorem~\ref{ZalesskiCee9}.
We compute $\Pi_9(H)$ using ${\tt PrimesForOrder}(H, s)$, 
where $s$ bounds the exponents of $\calc_9$-groups. 
Values of $s$ for $n\leq 11$ are 
stated in Section~\ref{StrongAT}.

\subsection{Isometry groups: $\calc_8$} 
\label{Isometry}

Let $\F$ be a field with an automorphism $\tau$ of order 
at most $2$. We will call $G\leq \GL(n, \F)$ an isometry group 
(over $\F$) if $G$ preserves a non-degenerate $\tau$-sesquilinear
form $\F^n\times \F^n\rightarrow \F$. So every subgroup of 
 $\SL(2,\F)=\Sp(2,\F)$ is an isometry group.
\begin{lemma}[{Cf.~\cite[Lemma~2.9]{ExpMath}}] 
\label{Unitary}
Let $G \leq \GL(n, \F)$.
\begin{itemize}
\item[{\rm (i)}] 
 If $G$ is an isometry group then $\tr(g) = \tr(\tau(g)^{-1})$
 for all $g \in G$.
\item[{\rm (ii)}]
If $G$ is absolutely irreducible and $\tr(g) = \tr(\tau(g)^{-1})$
 for all $g \in G$, then $G$ is an isometry group.
\end{itemize}
\end{lemma}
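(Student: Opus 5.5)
For (i), I will write the form as a Gram matrix. A non-degenerate $\tau$-sesquilinear form is $\beta(u,v)=u^{\top} J\,\tau(v)$ with $J\in\GL(n,\F)$, where $\tau$ acts entrywise. Invariance under $g$ means $g^{\top} J\, \tau(g) = J$, so $\tau(g)^{-1} = J^{-1} g^{\top} J$. Taking traces gives $\tr(\tau(g)^{-1}) = \tr(J^{-1}g^{\top}J) = \tr(g^{\top}) = \tr(g)$. This part is a direct computation.

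For (ii), I will compare two representations of $G$ on $V=\F^n$. The first is the natural one, $g\mapsto g$. The second is the twisted contragredient $\rho\colon g\mapsto \tau(g)^{-\top}$, i.e.\ $(\tau(g)^{-1})^{\top}$; it is a homomorphism because both $\tau$ and inverse-transpose are (anti)automorphisms that compose correctly. Its character is $\tr(\tau(g)^{-1})$, which by hypothesis equals $\tr(g)$. Since $G$ is absolutely irreducible, $\rho$ is also absolutely irreducible: applying $\tau$ and dualizing preserves absolute irreducibility, because $\rho(G)$ spans $\mathrm{Mat}(n,\F)$ if and only if $G$ does.

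The key step is to show that two absolutely irreducible representations with the same character are equivalent, over the field $\F$ itself. I would argue via enveloping algebras, which also covers positive characteristic, where character theory is delicate. Let $\sigma$ be the algebra map $\F G\to \mathrm{Mat}(n,\F)$ induced by $g\mapsto g$, and $\sigma'$ the one induced by $\rho$. Both are surjective. Because the trace form on $\mathrm{Mat}(n,\F)$ is non-degenerate, $\ker\sigma=\{x : \tr(\sigma(x)y)=0 \ \forall y\}=\{x: \tr(\sigma(xg))=0\ \forall g\in G\}$. Equality of characters, extended linearly to $\F G$, therefore gives $\ker\sigma=\ker\sigma'$. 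Hence $\sigma'\circ\sigma^{-1}$ is a well-defined $\F$-algebra automorphism of $\mathrm{Mat}(n,\F)$. By Skolem--Noether it is inner, so there is $P\in \GL(n,\F)$ with $\tau(g)^{-\top}=P g P^{-1}$ for all $g$. I expect this to be the main obstacle, as it is where the hypotheses (trace equality plus absolute irreducibility, rather than mere irreducibility) are genuinely used.

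It then remains to turn the intertwiner into a form. Rearranging gives $g^{\top} P^{-\top}\tau(g)=P^{-\top}$, so setting $J=P^{-\top}$ yields $g^{\top}J\tau(g)=J$. Thus $\beta(u,v)=u^{\top}J\tau(v)$ is a non-degenerate $\tau$-sesquilinear form preserved by $G$, and $G$ is an isometry group. Under the paper's definition no symmetry of $\beta$ (hermitian, alternating, etc.) is required, so this completes the proof.
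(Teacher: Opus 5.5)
Your proof is correct and follows the same route as the paper. Part (i) is the identical Gram-matrix computation. For (ii), the paper also compares the natural representation with the twisted contragredient $g\mapsto(\tau(g)^{-1})^\top$ and reads the form off an intertwining matrix.

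The only real difference is in the key step. The paper simply asserts that, by absolute irreducibility, equal characters force the two representations to be equivalent. You prove this via the non-degenerate trace form (giving $\ker\sigma=\ker\sigma'$) and Skolem--Noether. That makes the argument self-contained and visibly valid over any field $\F$ and for infinite $G$, as the statement requires.

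There is one small slip at the end. From $\tau(g)^{-\top}=PgP^{-1}$, transposing gives $\tau(g)=P^{-\top}(g^{\top})^{-1}P^{\top}$, hence $g^{\top}P^{\top}\tau(g)=P^{\top}$. So the invariant form has matrix $J=P^{\top}$, not $P^{-\top}$. Your choice $J=P^{-\top}$ would correspond to the relation $\tau(g)^{-\top}=P^{-1}gP$. Since $P$ is an arbitrary intertwiner, replacing $P$ by $P^{-1}$ fixes this, and the conclusion stands.
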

\begin{proof}
(i)~Let $\Omega$ be a matrix of a form preserved by $G$; i.e.,
$g^\top \Omega \tau(g) = \Omega$ for all $g\in G$. Non-degeneracy of 
the form implies that $\Omega$ is invertible. Thus
$\Omega^{-1}g^\top \Omega = \tau(g)^{-1}$, and (i) follows.

(ii)~In this case, the identity and contragredient representations 
of $G$ have equal characters. Since $G$ is absolutely irreducible,
these representations are equivalent: $g = \Omega^{-1}
(\tau(g)^\top)^{-1} \Omega$ for some $\Omega\in \allowbreak \GL(n,\F)$
and all $g\in G$. Hence $G$ preserves the form with matrix $\Omega$.
\end{proof}

Set $\tau = 1$. We define a procedure ${\tt PrimesForIsometry}(H)$
that accepts absolutely irreducible $H\leq \SL(n,R)$ and
searches for $h \in H$ such that $c := \tr(h) - \tr(h^{-1}) \neq 0$. 
If this search succeeds, then using \cite{NSrecog} the procedure
returns the set $\Pi_8(H)$ of maximal ideals $I$ of $R$ such that 
$I \in \pi(c)$ and  $\varphi_I(H)$ preserves an alternating or 
symmetric bilinear form over $R/I$.
 Suppose that $H$ does not preserve an alternating or symmetric 
form; so Lemma~\ref{Unitary}~(ii) guarantees that $h$ as required 
exists. By Lemma~\ref{Unitary}~(i), $\varphi_I(H)$ for any maximal 
ideal $I\subset R$ preserves an alternating or symmetric form if and 
only if $I \in \Pi_8(H)$.

Recall that $g\in \GL(n,\F)$ is a \emph{similarity} of a
sesquilinear form $f\colon \F^n\times \F^n\rightarrow \F$
if there exists $a\in \allowbreak \F^\times$ such 
that $f(gu,gv) = a f(u,v)$ for all $u, v\in \F^n$. We will 
need to exclude congruence images of $H$ that are in a 
similarity group. To achieve this, for input $H$ such that 
$[H,H]$ is absolutely irreducible, we modify the initial step 
of ${\tt PrimesForIsometry}(H)$: now we search for $h\in [H,H]$ 
such that $\tr(h) - \tr(h^{-1}) \neq 0$ (e.g., commutators 
$[h_1,h_2]$, $h_i\in H$ could be tried as candidates for $h$).
 The final step is to check whether $\varphi_I(H)$ 
lies in a similarity group of a symmetric or alternating bilinear 
form. We name the resultant procedure
 ${\tt PrimesForSimilarity}(H)$. 
\begin{lemma}[{Cf.~\cite[Lemma~2.10]{ExpMath}}] 
\label{iso1}
Let $H\leq \SL(n,R)$ and suppose that $[H,H]$ is absolutely 
irreducible and does not preserve an alternating or symmetric 
bilinear form over $\P$. Then $\varphi_I(H)$ is not 
contained in a similarity group of an alternating or symmetric 
form over $R/I$ for almost all maximal 
ideals $I$ of $R$.
\end{lemma}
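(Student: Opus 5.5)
The plan is to reduce the statement to the isometry case for $[H,H]$ by means of Lemma~\ref{Unitary}, using the fact that similarities become isometries on commutators. Since $[H,H]$ is absolutely irreducible and preserves no alternating or symmetric form over $\P$, Lemma~\ref{Unitary}~(ii) (with $\tau=1$) yields $h\in[H,H]$ with $c:=\tr(h)-\tr(h^{-1})\neq 0$. Here $c\in R$, and since $R$ is Dedekind, $\pi(c)$ is finite. The claim is that for every maximal ideal $I\notin\pi(c)\cup\pi(\Delta(A))$, where $A$ is a basis of $\langle [H,H]\rangle_\P$ chosen inside $[H,H]$, the image $\varphi_I(H)$ lies in no similarity group of a non-degenerate alternating or symmetric bilinear form over $R/I$. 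This leaves only finitely many exceptions.

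The key step is the following. Suppose $\varphi_I(H)$ lies in the similarity group of such a form $f$ with matrix $\Omega$. Then each $g\in\varphi_I(H)$ satisfies $g^\top\Omega g=a_g\Omega$ for some scalar $a_g\in(R/I)^\times$. The map $g\mapsto a_g$ is a homomorphism to the abelian group $(R/I)^\times$, because $\Omega$ is invertible and so $a_g$ is uniquely determined. Its kernel therefore contains $[\varphi_I(H),\varphi_I(H)]=\varphi_I([H,H])$. Hence $\varphi_I([H,H])$ preserves $f$, i.e.\ it is an isometry group over $R/I$. Lemma~\ref{Unitary}~(i) then gives $\tr(\varphi_I(x))=\tr(\varphi_I(x)^{-1})$ for all $x\in[H,H]$. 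In particular $\varphi_I(c)=\tr(\varphi_I(h))-\tr(\varphi_I(h^{-1}))=0$, so $c\in I$, i.e.\ $I\in\pi(c)$. Here I use that $\varphi_I$ commutes with trace and inversion, which holds because $\varphi_I$ is a ring homomorphism on matrices and $h^{-1}\in\SL(n,R)$.

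The argument in the previous paragraph already excludes every $I\notin\pi(c)$ without needing absolute irreducibility of the image, since only direction (i) of Lemma~\ref{Unitary} is applied modulo $I$. The condition $I\notin\pi(\Delta(A))$ is therefore not needed, and the finite exceptional set is just $\pi(c)$. The one point I expect to need care is the existence of $h$ inside $[H,H]$ rather than merely inside $H$. This is why the hypothesis is placed on $[H,H]$, and it follows from Lemma~\ref{Unitary}~(ii) applied to $G=[H,H]$ over $\F=\P$ with $\tau=1$, with contrapositive reasoning. I do not anticipate any other obstacle. The homomorphism $g\mapsto a_g$ is well defined because the form is non-degenerate, and this is the only structural input beyond the earlier lemma.
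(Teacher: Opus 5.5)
Your proof is correct and is the paper's argument made explicit. The paper's proof only cites Lemma~\ref{Unitary} and the design of ${\tt PrimesForSimilarity}$, which picks $h\in[H,H]$ with $\tr(h)\neq\tr(h^{-1})$ and keeps the exceptional set inside $\pi(c)$; it leaves implicit the step you supply, that the similarity multiplier is a homomorphism to an abelian group, so $\varphi_I([H,H])$ is an isometry group. One further implicit point, shared with the paper, is that using Lemma~\ref{Unitary}~(ii) contrapositively requires an absolutely irreducible group preserving some non-degenerate bilinear form $\Omega$ to preserve a symmetric or alternating one; this holds because Schur's lemma forces $\Omega^{-1}\Omega^\top=\pm 1_n$.
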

\begin{proof}
This follows from Lemma~\ref{Unitary} and the definition of
 $\tt PrimesForSimilarity$.
\end{proof}

The class $\calc_8$ in $\SL(n,p^k)$ comprises similarity 
groups of classical sesquilinear forms over $\F_{p^r}$ for 
$r\hspace{.5pt} |\hspace{.5pt}k$.
We split $\calc_8$ into the union of two subsets, $\calc_8^{(1)}$ 
(orthogonal and symplectic) and $\calc_8^{(2)}$ (unitary). 
Likewise, we re-define $\Pi_8 (H)= \Pi_8^{(1)}(H) \cup \Pi_8^{(2)}(H)$,
where $\Pi_8^{(1)}(H)= {\tt PrimesFor}$-${\tt Similarity}(H)$ and 
$\Pi_8^{(2)}(H)$ is the set of all maximal ideals $I$ of $R$ such that  
$\varphi_I(H)$ is contained in a similarity group of an Hermitian form 
over $R/I$.

\subsection{Congruence images in prime degree}
\label{Summation}

We recap the procedures of this section. Let $n$ be a prime, and 
let $H \leq \SL(n, R)$ be absolutely irreducible, primitive, 
non-solvable, and infinite.

If $i\not \in \{5,8\}$ then $\varphi_I(H)$ is not a 
$\calc_i$-group for almost all maximal ideals $I$ of $R$. 
 In these cases ${\tt PrimesForAbsIrreducible}$,
 ${\tt PrimesForMonomial}$, 
 ${\tt PrimesForSolvable}$, and ${\tt PrimesForOrder}$
compute the finite sets $\Pi_i(H)$ of maximal ideals 
$I\subset R$ such $\varphi_I(H)$ is a $\calc_i$-group. 

Suppose that $[H,H]$ is absolutely irreducible.
 ${\tt PrimesForSimilarity}$ returns the set $\Pi_8^{(1)}(H)$
of maximal ideals $I$ such that 
$\varphi_I(H)$ is not a $\calc_8^{(1)}$-group.
If $R= \mathcal{O}$ then ${\tt PrimesForSubfields}$
decides finiteness of
the set $\Pi_5(H)$ of maximal ideals $I$ such that
$\varphi_I(H)$ is a $\calc_5$-group,
returning $\Pi_5(H)$ when it is finite.

\section{Computing congruence quotients of dense subgroups} 
\label{StrongAT}

In this section we give an algorithm to find the 
 congruence images of a finitely generated dense group 
$H\leq \SL(n, R)$ modulo all maximal ideals of $R=
\mathcal{O}[1/\mu]$. 
 This will leave only a finite set
 of ideals $\Pi(H)$ for which the congruence images
 should be computed.
In particular, we formulate conditions under which $H$ 
 surjects onto $\SL(n, R/I)$ when $I \notin \Pi(H)$.
We start by establishing various 
consequences of density 
that provide more background for the algorithms.

Recall that the adjoint module for $\SL(n, \F)$ is the 
$\F$-vector space 
\[
\mathfrak{sl}(n, \F) = 
\{x \in \mathrm{Mat}(n, \F) \, | \,
 \tr(x) = 0\}
\]
of dimension $n^2-1$ on which $\SL(n, \F)$ acts by conjugation.
This action gives rise to the  adjoint representation 
$\Ad \colon  \SL(n, \F) \rightarrow \GL(n^2-1, \F)$.
\begin{proposition}[{\cite[9.1]{Rivin}}]
\label{Rivin91}
A subgroup $G$ of $\SL(n,\C)$ is dense if and only 
if $G$ is infinite and $\Ad(G)$ is absolutely irreducible.
\end{proposition}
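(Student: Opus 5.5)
We must prove: $G\le \SL(n,\C)$ is Zariski dense if and only if $G$ is infinite and $\Ad(G)$ is absolutely irreducible. The plan is to pass to the Zariski closure $\bar G$, a closed subgroup of $\SL(n,\C)$, and use the correspondence between closed connected subgroups and Lie subalgebras. The key observation is that $\Ad(G)$-invariant subspaces of $\mathfrak{sl}(n,\C)$ coincide with $\Ad(\bar G)$-invariant ones. Indeed, the stabilizer of a subspace is Zariski closed, so $\Ad(G)$ and $\Ad(\bar G)$ have the same invariant subspaces. Over $\C$, absolute irreducibility is the same as irreducibility.

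For the forward direction, suppose $\bar G=\SL(n,\C)$. Then $G$ is infinite, since a finite group is Zariski closed and $\SL(n,\C)$ is infinite. Moreover $\mathfrak{sl}(n,\C)$ is an irreducible $\SL(n,\C)$-module under $\Ad$, because it is a simple Lie algebra and the invariant subspaces of a connected group are the ideals. By the observation above, $\Ad(G)$ is irreducible.

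For the converse, suppose $G$ is infinite and $\Ad(G)$ is irreducible. Let $\bar G^\circ$ be the identity component of $\bar G$. It has finite index in $\bar G$, so it is positive dimensional because $\bar G$ is infinite. Its Lie algebra $\mathfrak h$ is a nonzero subspace of $\mathfrak{sl}(n,\C)$. Since $\bar G^\circ$ is normal in $\bar G$, the subspace $\mathfrak h$ is invariant under $\Ad(\bar G)$, and hence under $\Ad(G)$. Irreducibility then forces $\mathfrak h=\mathfrak{sl}(n,\C)$. A closed connected subgroup whose Lie algebra equals that of the connected group $\SL(n,\C)$ must be all of $\SL(n,\C)$, by equality of dimensions together with irreducibility of the variety. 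Hence $\bar G=\SL(n,\C)$.

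The main obstacle is the care needed in the converse. One must use the identity component to get a genuine Lie subalgebra of positive dimension, because the infinite hypothesis is exactly what excludes $\mathfrak h=0$. One must also check that normality of $\bar G^\circ$ gives $\Ad(\bar G)$-invariance of $\mathfrak h$, which follows by differentiating conjugation. Everything else is standard algebraic group theory over characteristic zero, where Lie algebra arguments are valid.
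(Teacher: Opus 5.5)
Your proof is correct, provided $n\ge 2$. The statement implicitly needs this, since $\SL(1,\C)$ is trivial, and your use of the simplicity of $\mathfrak{sl}(n,\C)$ also needs it. The paper gives no argument of its own and simply quotes \cite[9.1]{Rivin}. Your route is the standard proof of that cited result: $\Ad(G)$ and $\Ad(\overline{G})$ have the same invariant subspaces, simplicity of $\mathfrak{sl}(n,\C)$ gives the forward direction, and for the converse the Lie algebra of the identity component of the Zariski closure is a nonzero $\Ad(G)$-invariant subspace.
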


\begin{proposition} 
\label{AdjModule}
Let $G\leq \SL(n,\C)$ be dense. Then $G$ is primitive (irreducible) over $\C$, 
non-solvable, and does not preserve a non-degenerate alternating 
or symmetric bilinear form over $\C$ if $n>2$.
\end{proposition}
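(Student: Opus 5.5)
The plan is to derive all three properties directly from density, i.e.\ from the Zariski closure of $G$ being $\SL(n,\C)$. Each property is a polynomial condition: if $G$ had it, so would its closure, and $\SL(n,\C)$ does not. Proposition~\ref{Rivin91} gives an alternative route for irreducibility and non-solvability, but the closure argument is cleaner.

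\emph{Irreducibility and primitivity.} Suppose $G$ stabilizes a proper nonzero subspace $W\leq\C^n$. The stabilizer of $W$ in $\SL(n,\C)$ is Zariski closed, since it is cut out by the linear conditions $gW\subseteq W$. It therefore contains the closure $\SL(n,\C)$ of $G$, which is absurd because $\SL(n,\C)$ is transitive on nonzero vectors. Now suppose $G$ preserves a decomposition $\C^n=V_1\oplus\cdots\oplus V_t$ with $t>1$ and all $V_i$ nonzero. Then $G$ lies in the stabilizer of this decomposition. That stabilizer is a closed subgroup whose identity component stabilizes each $V_i$, so it is a proper closed subgroup. This again contradicts density. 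In particular $G$ is irreducible over $\C$ as well as primitive.

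\emph{Non-solvability.} If $G$ is solvable of derived length $l$, then $G$ satisfies the identity that the $l$-fold iterated commutator word $w_l(x_1,\dots,x_{2^l})$ equals $1$. The set of tuples in $\SL(n,\C)^{2^l}$ on which $w_l$ vanishes is closed and contains $G^{2^l}$. It therefore contains $\overline{G}^{\,2^l}=\SL(n,\C)^{2^l}$. Equivalently, the closure of a solvable group is solvable. Since $\SL(n,\C)$ is not solvable for $n\geq 2$, this is a contradiction.

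\emph{Forms.} Suppose $G$ preserves a non-degenerate bilinear form with matrix $\Omega$, so $g^\top\Omega g=\Omega$ for all $g\in G$. These are polynomial equations in the entries of $g$, so the closure $\SL(n,\C)$ preserves $\Omega$ too. One way to finish is dimension counting: the isometry group $\mathrm{O}(n,\C)$ or $\Sp(n,\C)$ has dimension $n(n-1)/2$ or $n(n+1)/2$, and for $n>2$ both are less than $n^2-1$. A more elementary finish is to apply a transvection $t_{1,2}(a)$ with all $a$ and deduce that $\Omega=0$. The restriction $n>2$ is needed because $\SL(2,\C)=\Sp(2,\C)$.

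The main obstacle is only a point of rigor: the set-theoretic stabilizers and identity-type conditions must be Zariski closed, and this is immediate from their polynomial definitions. The only nontrivial piece is the primitivity step, where I would use that a closed subgroup permuting the $V_i$ has a finite-index subgroup stabilizing each $V_i$, so it is reducible and hence proper.
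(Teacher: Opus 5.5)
Your proof is correct, but it takes a different route from the paper's.

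The paper argues through Proposition~\ref{Rivin91}: density means $G$ is infinite with $\Ad(G)$ absolutely irreducible. For each property it then exhibits a proper nonzero $\Ad(G)$-invariant subspace of $\mathfrak{sl}(n,\C)$:
\begin{itemize}
\item the block strictly upper triangular matrices when $G$ is reducible;
\item the $(b-1)$-dimensional space of trace-zero block-scalar matrices when $G\leq \GL(a,\C)\wr\mathrm{Sym}(b)$, which is invariant under the entire wreath product, so no connectedness argument is needed;
\item $\{x \mid x^\top\Phi+\Phi x=0\}$ when $G$ preserves a form with Gramian $\Phi$.
\end{itemize}
Non-solvability comes from primitivity via Zassenhaus's theorem (a solvable primitive group is central-by-finite), together with finiteness of the scalars in $\SL(n,\C)$ and infinitude of $G$.

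You instead use the definition of density directly, passing each closed condition up to $\overline{G}=\SL(n,\C)$. The paper only alludes to this approach after the proof and in Remark~\ref{DenseQuick}. Your route is more self-contained, since it avoids both Rivin's criterion and Zassenhaus's theorem. It does rest on some standard facts you should state:
\begin{itemize}
\item the set-wise stabilizer of $\{V_1,\dots,V_t\}$ is closed, being a finite union over $\sigma\in\mathrm{Sym}(t)$ of the closed sets $\{g \mid gV_i=V_{\sigma(i)} \text{ for all } i\}$;
\item $\overline{G}^{\,k}\subseteq\overline{G^{k}}$ in $\SL(n,\C)^k$, proved by fixing coordinates one at a time, because the Zariski topology on a product is not the product topology;
\item $\SL(n,\C)$ is connected, which your primitivity step needs.
\end{itemize}
The paper's route buys something different. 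It reduces everything to reducibility of the adjoint module, which is the viewpoint it adopts for arbitrary $n$ in \cite{SLnO}. It also shows that the hypothesis ``infinite with $\Ad(G)$ absolutely irreducible'' already suffices.

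One small correction to your elementary finish for forms. The family $t_{1,2}(a)$ alone does not force $\Omega=0$: for $n=4$, every $t_{1,2}(a)$ preserves the symplectic form with Gramian $E_{12}-E_{21}+E_{34}-E_{43}$, where $E_{ij}$ are matrix units. You need all the $t_{i,j}(a)$, which are available because all of $\SL(n,\C)$ preserves $\Omega$. From $t_{i,j}(a)$ you get $\Omega_{ii}=0$ and $\Omega_{ik}=\Omega_{ki}=0$ for $k\neq j$. Choosing $j\notin\{i,k\}$ requires exactly $n\geq 3$. Your dimension-count finish is fine as it stands.
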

\begin{proof}
(Cf.~the proof of \cite[Proposition~2.1]{SAT}.)
By Proposition~\ref{Rivin91}, $G$ is infinite and 
$\Ad(G)\leq \GL(n^2-1,\C)$ is irreducible.

Suppose that $G$ is reducible.
Then $G$ is conjugate to a block upper triangular group with main diagonal 
$(G_1, G_2)$ where $G_i$ has degree $n_i<n$. It is easily seen that $\Ad(G)$ 
leaves invariant the proper non-zero subspace of $\mathfrak{sl}(n, \C)$ 
consisting of all block upper triangular matrices with main diagonal 
$(0_{n_1},0_{n_2})$.
This contradiction shows that $G$ is absolutely irreducible.

If $G$ is imprimitive over $\C$, then $G\leq K:= 
\allowbreak \GL(a,\C) \hspace{1pt} \wr \hspace{1pt} 
\mathrm{Sym}(b)$ up to conjugacy, for some 
$a, \allowbreak b$ such that $b>1$ and $ab=n$.
Let $U$ be the subspace of $\mathfrak{sl}(n,\C)$ 
spanned by all block diagonal matrices with zero trace 
whose blocks are in $\{\pm 1_a,0_a\}$. Clearly $U$ has 
dimension $b-1$ and is invariant under conjugation by
 $K$. Hence $G$ must be primitive.

By \cite[3.4]{Wehrfritz} (a result of Zassenhaus), a 
solvable primitive subgroup of $\GL(n,\C)$ is 
central-by-finite. Thus, because scalar subgroups of
$\SL(n,\C)$ are finite, $G$ cannot be solvable.

Finally, suppose that $G$  preserves a non-degenerate 
symmetric or alternating bilinear form over $\C$, with
Gramian $\Phi$. Then 
$\Ad(G)$ leaves invariant the adjoint module 
$L= \allowbreak
\{ x\in \mathfrak{sl}(n,\C) \, | \, \allowbreak
x^\top\Phi +\Phi x = 0_n\}$.
 However, $L$ is a proper non-zero subspace of 
$\mathfrak{sl}(n,\C)$ if $n>2$. 
\end{proof}

Parts of Proposition~\ref{AdjModule} also follow from basic 
facts about the Zariski topology on $\GL(n,\C)$.
\begin{lemma}
\label{DerivDense}
Let  $G\leq \SL(n,\C)$. Then $G$ is dense in $\SL(n,\C)$ 
if and only if $[G,G]$ is dense.
\end{lemma}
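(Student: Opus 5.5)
One direction is immediate. If $[G,G]$ is dense, then $\SL(n,\C)=\overline{[G,G]}\subseteq \overline{G}\subseteq \SL(n,\C)$, where bars denote Zariski closure. So I will only argue the converse: assume $G$ is dense and show $[G,G]$ is dense.

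The plan is to use two standard facts about Zariski closures of subgroups of an algebraic group. First, for subgroups $A,B$, the closure of $[A,B]$ equals $[\overline{A},\overline{B}]$, the subgroup generated by commutators of the closures. Second, $\SL(n,\C)$ is a connected algebraic group that equals its own derived group, since it is perfect. Applying the first fact with $A=B=G$ gives $\overline{[G,G]}=[\overline{G},\overline{G}]=[\SL(n,\C),\SL(n,\C)]=\SL(n,\C)$, which is what we want.

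The only step needing care is the first fact, and I would prove the inclusion $[\overline{A},\overline{B}]\subseteq\overline{[A,B]}$ by the usual two-step closure argument. Fix $a\in A$. The map $y\mapsto [a,y]$ is a morphism sending $B$ into $\overline{[A,B]}$, so by continuity it sends $\overline{B}$ into $\overline{[A,B]}$. Now fix $y\in\overline{B}$. The map $x\mapsto[x,y]$ sends $A$ into $\overline{[A,B]}$, so it also sends $\overline{A}$ into $\overline{[A,B]}$. Hence every commutator $[x,y]$ with $x\in\overline{A}$, $y\in\overline{B}$ lies in $\overline{[A,B]}$. Since the closure of a subgroup is a subgroup, the group these commutators generate lies there as well. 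The reverse inclusion is trivial, because $[\overline{A},\overline{B}]$ is closed: the derived group of an algebraic group is closed. In fact only the inclusion just proved is needed here.

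As an alternative route, one could use Proposition~\ref{Rivin91}. Here $[G,G]$ is infinite, since otherwise $G$ would be finite-by-abelian, hence virtually abelian, hence solvable-by-finite, which contradicts Proposition~\ref{AdjModule}. One would then also need $\Ad([G,G])$ to be irreducible, and that is less direct. So I will use the closure argument, where the only potential obstacle is the continuity step above, and that is routine.
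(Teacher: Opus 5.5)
Your argument is correct and is essentially the paper's proof: the paper likewise reduces the lemma to the inclusion $[\hspace{1pt}\overline{G},\overline{G}\hspace{1pt}]\subseteq\overline{[G,G]}$ together with perfectness of $\SL(n,\C)$. The paper cites that inclusion from Wehrfritz rather than proving it, and your continuity argument, fixing one variable at a time, is the standard proof of it.

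One small caution: the equality $\overline{[A,B]}=[\overline{A},\overline{B}]$, which you state for arbitrary subgroups, can fail when $A\neq B$. For example, two finite cyclic subgroups of $\SL(2,\C)$ can have an infinite cyclic, hence non-closed, mixed commutator group, and your appeal to closedness of derived groups only covers $A=B$. As you say, though, only the inclusion is used, so the proof is unaffected.
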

\begin{proof}
Denote the Zariski closure of $K\leq \SL(n,\C)$ in $\SL(n,\C)$ 
by $\overline{K}$. It is not difficult  to prove that 
$[\hspace{1.25pt} \overline{G}, \overline{G}\hspace{1.25pt} ] 
\subseteq \overline{[G,G]}$; see e.g., \cite[5.10]{Wehrfritz}.
\end{proof}

 Strong approximation implies Lemma~\ref{DerivDense}
when $G\leq \SL(n,\Q)$ and $G$ is finitely generated. 

\begin{lemma}
\label{PrimSFisSolvable}
Let $q$ be a prime. If $H\leq \SL(q,R)$  is infinite, 
solvable-by-finite, and primitive over $\C$, then $H$ is 
solvable.
\end{lemma}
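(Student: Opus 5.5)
Let $N \unlhd H$ be a solvable normal subgroup of finite index; this exists because $H$ is solvable-by-finite. The plan is to show $N$ is central-by-finite, and that $H$ is then abelian-by-finite with a structure forcing solvability, using primality of $q$ throughout.

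\emph{Step 1: $N$ is either primitive or reducible on its homogeneous components.} By Clifford's theorem applied to $N \unlhd H$ with $H$ irreducible over $\C$, the natural module $V=\C^q$ restricted to $N$ is a direct sum of $H$-conjugate homogeneous components. If there were more than one component, $H$ would permute them transitively, contradicting primitivity of $H$. So $V|_N$ is homogeneous, $V|_N \cong W^{\oplus e}$ with $\dim W\cdot e=q$. Since $q$ is prime, either $e=q$ and $\dim W=1$, or $e=1$ and $N$ is irreducible.

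\emph{Step 2: case analysis.} If $\dim W=1$, then $N$ acts by scalars, so $N$ is a finite scalar subgroup of $\SL(q,\C)$. This contradicts $|H:N|<\infty$ with $H$ infinite. Hence $N$ is irreducible. If $N$ were imprimitive, then since $q$ is prime it would be monomial. I would then argue, again via primitivity of $H$, that the system of imprimitivity can be chosen $H$-invariant. Concretely, I would pass to the Fitting subgroup or a maximal abelian normal subgroup $A$ of $N$ characteristic in $N$ (hence normal in $H$), apply Clifford to $A\unlhd H$, and conclude from primitivity of $H$ and primality of $q$ that $A$ is scalar, hence finite. Then $N$ is irreducible solvable with every abelian normal subgroup finite. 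By the Zassenhaus result quoted in the proof of Proposition~\ref{AdjModule} (\cite[3.4]{Wehrfritz}: a solvable linear group that is primitive is central-by-finite; more generally solvable irreducible groups whose abelian normal subgroups are scalar are finite modulo centre), $N$ is finite. Since $|H:N|<\infty$, $H$ is then finite, a contradiction.

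\emph{Step 3: conclude.} Rearranging the above as a direct argument, it is cleaner to work with a characteristic abelian normal subgroup $A$ of $N$, normal in $H$, with $|H:A|$ possibly infinite but $A$ chosen as large as possible, e.g., the last nontrivial term of the derived series of $N$. Primitivity of $H$ plus Clifford plus $q$ prime forces either $A$ scalar or $V|_A$ a sum of $q$ distinct characters permuted by $H$; the latter is excluded by primitivity. Inducting down the derived series, every term is scalar, so $N$ is finite, contradiction. Thus the genuine conclusion must come from $H$ itself: the argument shows that infinite, solvable-by-finite, primitive is impossible unless the finite quotient issue is vacuous. I would therefore expect the intended statement to be proved via Zassenhaus applied to $H$: $H$ is central-by-finite, hence finite, contradicting infiniteness.

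The main obstacle is exactly this step. One must decide whether the hypotheses are contradictory, which would make the lemma vacuously true, or whether ``primitive'' should be read as over $\P$ rather than over $\C$. For $R$-groups, primitive over $\C$ plus infinite solvable-by-finite should be inconsistent, since the Zassenhaus/Mal'cev theorem extends to solvable-by-finite primitive groups being abelian-by-finite with the abelian part scalar. I would first try to prove the contradiction directly as above, checking carefully that the Clifford step handles the normal subgroup $A$ of $H$ (not just of $N$).
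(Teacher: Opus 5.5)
Your suspicion is right, and there is no need to reinterpret ``primitive'': for $H\le\SL(q,\C)$ the hypotheses force $H$ to be finite. So the lemma holds vacuously. What it is really used for, in the proof of Theorem~\ref{SATN}, is the contrapositive: an infinite, primitive, non-solvable group is not solvable-by-finite. Step~1 and the first half of Step~2 are correct. The gap is that you never prove that an irreducible solvable $N\unlhd H$ must be finite:
\begin{itemize}
\item Clifford applied to $H$ only shows that abelian subgroups of $N$ that are \emph{normal in $H$} are scalar. A maximal abelian normal subgroup of $N$ need not be characteristic, and the Fitting subgroup need not be abelian.
\item So ``every abelian normal subgroup of $N$ is finite'' does not follow. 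Without it, neither Zassenhaus nor your variant applies to $N$, which need not be primitive.
\item The induction in Step~3 breaks at the first non-abelian term. $N^{(d-2)}$ is normal in $H$ with scalar commutator subgroup, but it can act irreducibly (think of an extraspecial group of order $q^3$ in a finite primitive subgroup of $\SL(q,\C)$). Hence ``every term is scalar'' is false, and your abelian dichotomy says nothing about this term.
\item Applying Zassenhaus to $H$ is circular, since it presupposes that $H$ is solvable.
\end{itemize}
You also end without committing to any argument.

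The missing idea is to pass to a core rather than trying to make $N$'s system of imprimitivity $H$-invariant.
\begin{itemize}
\item If $N$ is primitive, Zassenhaus (\cite[3.4]{Wehrfritz}) makes $N/Z(N)$ finite. By Schur, $Z(N)$ is scalar, so its order divides $q$. Hence $N$ is finite.
\item If $N$ is imprimitive then, as $q$ is prime, it is monomial. Its diagonal subgroup $D$ is then abelian, normal in $N$, and of finite index. Since $N\le N_H(D)$ and $|H:N|<\infty$, $D$ has only finitely many $H$-conjugates. Each lies in $N\unlhd H$ with the same finite index. Therefore $D_H=\bigcap_{h\in H}D^h$ is abelian, normal in $H$, and of finite index in $N$.
\item Now Clifford and primitivity of $H$ make $V|_{D_H}$ homogeneous. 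An abelian group acting homogeneously over $\C$ acts by scalars, so $D_H$ is finite, and hence so is $N$.
\end{itemize}
Either way $H$ is finite, a contradiction.

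Alternatively, use the Zariski closure of $H$. Its identity component is connected, solvable and normal. Clifford plus Lie--Kolchin make it diagonalizable, and primitivity makes it scalar, hence trivial in $\SL(q,\C)$. So $H$ is finite; this route does not even need $q$ prime.

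For comparison, the paper gives no independent argument: it just invokes the proof of \cite[Lemma~2.4]{ExpMath}.
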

\begin{proof}
The proof of \cite[Lemma~2.4]{ExpMath} is valid for subgroups 
of $\SL(q, R)$. 
\end{proof}

\begin{remark}
\label{DenseQuick}
Since the Zariski closure of a solvable subgroup of $\SL(n,\C)$ 
is a solvable subgroup of $\SL(n,\C)$~\cite[5.11]{Wehrfritz},
dense $G\leq \SL(n,\C)$ is not solvable-by-finite.
\end{remark}

For each maximal ideal $I$ of $R$, let $\mathcal{X}_n(R/I)$ be the 
union of $\{\SL(n, R/I)\}$ and the set of all proper subgroups $G$ 
of $\SL(n,R/I)$ such that $G$ is a $\calc_5$-group or 
$\calc_8^{(2)}$-group, and not a $\calc_8^{(1)}$-group nor a 
$\calc_i$-group for $i=1,2,3,6,9$. We denote by $\Pi(H)$ the set  
of maximal ideals $I$ of $R$ such that 
$\varphi_I(H)\not \in \mathcal{X}_n(R/I)$.
\begin{theorem}
\label{SATN}
Let $q$ be a prime and $H$ be a finitely generated subgroup of 
$\SL(q,R)$ dense in $\SL(q,\C)$. Then $\Pi(H)$ is finite.
\end{theorem}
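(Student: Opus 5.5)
Since $\mathcal{X}_q(R/I)$ contains $\SL(q,R/I)$ and every proper subgroup except the $\calc_i$-groups for $i\in\{1,2,3,6,9\}$ and the $\calc_8^{(1)}$-groups, any $I\in\Pi(H)$ has $\varphi_I(H)$ in one of these classes (the classes $\calc_4,\calc_7$ are empty in prime degree). Hence
\[
\Pi(H)\subseteq \Pi_1(H)\cup\Pi_2(H)\cup\Pi_3(H)\cup\Pi_6(H)\cup\Pi_9(H)\cup\Pi_8^{(1)}(H),
\]
and I will show that each of these six sets is finite, using density to verify the hypotheses of the procedures in Section~\ref{Asch}.

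First I collect the consequences of density. By Proposition~\ref{Rivin91}, $H$ is infinite. By Proposition~\ref{AdjModule}, $H$ is absolutely irreducible, primitive over $\C$, and non-solvable. Remark~\ref{DenseQuick} shows $H$ is not solvable-by-finite. By Lemma~\ref{DerivDense}, $[H,H]$ is also dense, so Proposition~\ref{AdjModule} applies to it as well: $[H,H]$ is absolutely irreducible and, if $q>2$, preserves no non-degenerate alternating or symmetric form over $\C$, hence none over $\P$.

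I then take the classes one at a time.

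\textbf{$\calc_1$.} Since $H$ is absolutely irreducible, $\Pi_1(H)\subseteq\pi(\Delta(A))$, which is finite.

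\textbf{$\calc_2$.} Since $H$ is not solvable-by-finite, the Tits alternative gives $g,h$ with $[g^e,h^e]\neq 1_q$. Then $\Pi_2(H)\subseteq\pi(c)$, which is finite.

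\textbf{$\calc_3$.} As $H$ is non-solvable, there is a nontrivial element of $H^{(3)}$. Since $\calc_3$-groups are metacyclic in prime degree, $\Pi_3(H)\subseteq\pi(c)$ for the corresponding $c$.

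\textbf{$\calc_6$ and $\calc_9$.} $H$ is infinite and finitely generated, so it has an element of infinite order. By Lemma~\ref{Cee6} and Theorem~\ref{ZalesskiCee9}, the orders of these groups are bounded by some $s$ independent of $p$. Here $k\le m$ is fixed, which handles the possible $k$-dependence noted after Theorem~\ref{ZalesskiCee9}. Consequently $\Pi_6(H)\cup\Pi_9(H)\subseteq\pi(l)$, which is finite. For $\calc_9$ this bound only controls groups lying in $\calc_9$ and in no other class; groups lying in $\calc_9$ and also in some other class are already accounted for by the other sets in the union, which suffices.

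\textbf{$\calc_8^{(1)}$ with $q>2$.} Apply Lemma~\ref{iso1} to $[H,H]$. Its hypotheses hold by the previous paragraph, so $\Pi_8^{(1)}(H)$ is finite.

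The main obstacle is the case $q=2$ for $\calc_8^{(1)}$. Here every subgroup of $\SL(2,R/I)=\Sp(2,R/I)$ preserves an alternating form, so Lemma~\ref{iso1} fails. I would handle this by noting that for $q=2$ the symplectic group is the whole of $\SL(2)$, so it contributes no proper maximal subgroup to $\calc_8$ (only unitary or orthogonal similarity groups remain). The orthogonal ones are absent or reducible, hence already excluded via $\calc_1$, $\calc_2$ and $\calc_3$. I would check this against the tables of \cite{Brayetal} for $n=2$. It remains to confirm that the $\calc_9$ bound and the definitions of the $\Pi_i$ match membership in $\mathcal{X}_q(R/I)$ as stated, so the union above is finite and contains $\Pi(H)$.
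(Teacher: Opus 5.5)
Your proposal is correct and is essentially the paper's proof: the same containment $\Pi(H)\subseteq \Pi_1(H)\cup\Pi_2(H)\cup\Pi_3(H)\cup\Pi_6(H)\cup\Pi_9(H)\cup\Pi_8^{(1)}(H)$, with each set shown finite via Proposition~\ref{AdjModule}, Remark~\ref{DenseQuick}, Lemma~\ref{Cee6}, Theorem~\ref{ZalesskiCee9}, and Lemmas~\ref{iso1} and~\ref{DerivDense}. Your separate treatment of $\calc_8^{(1)}$ when $q=2$ is a small improvement on the written proof, which cites Lemma~\ref{iso1} without noting that its hypothesis fails in degree~$2$; that case is closed as you suggest, since $\calc_8=\emptyset$ for $\SL(2,p^k)$, as the paper remarks in Section~\ref{Exper}.
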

\begin{proof}
Certainly $\Pi(H)$ is contained in the union of $\Pi_8^{(1)}(H)$ 
and the $\Pi_i(H)$, $i = 1,2,3,6,9$. We apply Proposition~\ref{AdjModule} 
throughout. Subsections~\ref{Reducible}--\ref{Semilinear} and 
Lemma~\ref{PrimSFisSolvable} (or Remark~\ref{DenseQuick})
imply that the $\Pi_i(H)$ for $i = 1,2,3$, are finite. 
By Lemma~\ref{Cee6} and Theorem~\ref{ZalesskiCee9}, $\Pi_6(H)\cup \Pi_9(H)$ is 
finite. Also $\Pi_8^{(1)}(H)$ is finite by Lemmas~\ref{iso1} 
and \ref{DerivDense}.
\end{proof}

Theorem~\ref{SATN} and its proof give us a procedure 
${\tt PrimesForDense}(H)$ that computes $\Pi(H)$ for finitely 
generated dense $H \leq \SL(q, R)$. It combines outputs of the 
following procedures from Section~\ref{Asch} (summarized in 
Subsection~\ref{Summation}):

\vspace{7.5pt}

\begin{itemize}
\item ${\tt PrimesForAbsIrreducible}(H)$;

\vspace{3pt}

\item ${\tt PrimesForMonomial}(H)$;

\vspace{3pt}

\item ${\tt PrimesForSolvable}(H, 2)$;

\vspace{3pt}

\item ${\tt PrimesForSimilarity}(H)$, $n>2$ only;

\vspace{3pt}

\item ${\tt PrimesForOrder}(H, s)$, where $s$ is a bound on the
 exponent of groups in $\calc_6 \cup \calc_9$.
\end{itemize}

\vspace{7.5pt}

\noindent All $I$ from the above output such that $\varphi_I(H)$ 
is a $\mathscr{C}_5$-group or $\mathscr{C}_8^{(2)}$-group are 
deleted, and the procedure returns the surviving ideals of $R$. 
Note that if $q = 2,3,\allowbreak  5,\allowbreak 7,11$ then we 
can take $s$ to be $10, 21, 60, 84, 253$, respectively, by 
\cite[Remark~3.3]{ExpMath}; these values were obtained using {\sf GAP}
 and tables in \cite[Chapter~8]{Brayetal}.

As we know, in contrast to strong approximation for finitely generated
dense subgroups of $\SL(n,\Q)$, if $R \not \subseteq \Q$ 
then there could be infinitely many maximal ideals $I$
of $R$ such that $\varphi_I(H)\neq \SL(n,R/I)$. In other
words, $\Pi_5(H)$ or $\Pi_8^{(2)}(H)$ could be infinite (examples 
of the latter eventuality include certain 
$H\leq \SL(n,\Z[\hspace{1pt}\mathrm{i}\hspace{1pt}])$ 
preserving an Hermitian form; cf.~\cite[Section~5]{Bridson}).
However, by Lemma~\ref{DerivDense}, Subsection~\ref{Class5}, and
the above,
at least when $R=\mathcal{O}$ and $\Pi_8^{(2)}=\emptyset$, and 
we have found that $\Pi_5(H)$ is finite, we can compute the full 
set of maximal ideals $I$ such that $\varphi_I(H) \neq \SL(q,R/I)$.

\section{Experiments} 
\label{Exper}

Let $R=\mathcal{O}[1/\mu]$ and $q$ be a prime.
 We investigate a given finitely generated dense subgroup 
$H$ of $\Gamma := \allowbreak \SL(q, R)$ by running 
${\tt PrimesForDense}(H)$ to compute $\Pi(H)$. Then we find 
$\varphi_I(H)$ for each $I \in \Pi(H)$. Since $\varphi_I(H)\neq
\SL(n, R/I)$ might be a $\calc_i$-group for several $i$, we do 
not say what these classes are, but rather state the isomorphism type 
of $\varphi_I(H)$.  

We can learn more about $H$. For example, sometimes we are
able to decide whether  $H$ is thin, or whether it is a congruence 
subgroup. Here, the dense group $H$ is \emph{thin} if $|\G : H|$ 
is infinite. Also, a (finite-index) subgroup of $\G$ is a 
\emph{congruence subgroup} if it contains a 
\emph{principal congruence subgroup} (PCS), i.e., the kernel of 
$\varphi_I$ on $\G$ for some ideal $I$ of $R$. We call  
this ideal the \emph{level} of the PCS.

Experiments were conducted using our {\sf GAP}~\cite{Gap} 
implementation of the algorithms. We restrict to $q=2$
only, which abbreviates the list of subprocedures involved
 (practicality of the algorithms is not substantially 
affected by increasing the degree; cf.~the algorithms of \cite{ExpMath},
 over $\Q$). 
For if $q=2$ then groups in $\calc_1\cup \calc_2$ are 
solvable of derived length at most $4$, and $\calc_8= \allowbreak 
\emptyset$ (cf.~\cite[Remarks~2.8, 3.4]{ExpMath}).
Thus ${\tt PrimesForDense}(H)$ in this section combines 
${\tt PrimesFor}$-${\tt Solvable}(H, 4)$ and ${\tt PrimesForOrder}(H, 10)$
 only. 
Observe too that $H\leq \SL(2, \C)$ is dense if and only if it is 
not solvable-by-finite. Hence the main algorithm of \cite{TitsDFO} tests 
density of $H$. 

Machinery to compute with groups over number fields $\P$ was developed 
along the lines of Section~\ref{CongHom}. For experiments 
in this paper and \cite{SLnO}, 
$\P$ is contained in a cyclotomic field, and current 
{\sf GAP} functionality allows us to perform the necessary algebraic number 
theory calculations. No bottlenecks were encountered.
To determine $\pi(a)$ given $a\in \mathcal{O}$, we find the norm  
$\mathrm{N}_{\P/\mathbb{Q}}(a) \in \allowbreak \Z$, then factorize it over $\Z$ 
as a product of powers of primes $p$. Factorizing each $p$ over 
$\mathcal{O}$ is standard (Section~\ref{CongHom}), and for quadratic 
extensions this is simpler still~\cite[Proposition~3.8.3]{Koch}.
 	
We show how our algorithms can be used to answer questions about the
 subgroup structure of classical groups over rings. Actually all input groups 
are subgroups of $\SL(2,\mathcal{O}_d)$, where $\mathcal{O}_d$ denotes the 
ring of integers of $\P=\Q(\sqrt{-d}\hspace{.75pt})$ for some square-free 
positive $d\in \Z$. The projective group $\PSL(2,\mathcal{O}_d)$ is
 a \emph{Bianchi group}~\cite{Fine}.

See \url{https://github.com/hulpke/arithmetic} for the {\sf GAP}
code used.

\subsection{Thinness testing}
\label{O7}
We start with a demonstration of the main algorithm. Then we test thinness 
of subgroups of $\SL(n, R)$ when the congruence subgroup property (CSP) does 
not hold; i.e., $\SL(n,R)$ has finite-index subgroups that are not
 congruence subgroups. By results of Serre~\cite{Serre}, the CSP does
not hold in $\SL(2,\mathcal{O}_d)$.

Let $\G = \SL(2, \mathcal{O}_7)$ and
\[
H = \big\langle \begin{bmatrix}
                 1&0\\a&1\end{bmatrix},
                \begin{bmatrix} 1&b 
								\\ 0&1 \end{bmatrix} 
	\big\rangle \quad \text{where} \ \, a = \frac{37-\sqrt{-7}}{2}, \, 
	 b = 2-\sqrt{-7}.
\]
We have $R_H=\mathcal{O}_7=\Z[\frac{1+\sqrt{-7}}{2}]$. 
The ring  $\mathrm{Tr}(H)$ is 
generated by $\big\{2, \frac{71-39\sqrt{-7}}{2}\big\}$, and
 $\mathrm{Tr}(H)$ has index $78=2\cdot 3\cdot 13$ in
 $\mathcal{O}_7$. Since $\tr(H) \not\subseteq \allowbreak 
\Q$, $\Pi_5(H)$ is finite.
We run ${\tt PrimesForDense}(H)$ and ${\tt PrimesForSubfields}(H)$ 
 to compute $\Pi(H)$, now defined to be the set of all maximal 
ideals $I\subset \mathcal{O}_7$ such that $\varphi_I(H)
\neq \SL(2,\mathcal{O}_7/I)$. Then for each $I \in \Pi(H)$ we 
find $\varphi_I(H)$. Results are collected in Table~\ref{tab:table1}.

\begin{table}[h!]
\begin{center}
\caption{Congruence images of 
$H\le\SL(2, \mathcal{O}_7)$}
\label{tab:table1}
\begin{tabular}{|c|c|c|c|}
\hline
Prime & Norm & Index  & Structure \\
\hline
\vspace{-12pt} & & &  \\
$\frac{-1+\sqrt{-7}}{2}$&2&3&$C_2$\\
\vspace{-12pt} & & &  \\
$3$&9&30&$\SL(2,3)$\\
\vspace{-12pt} & & &  \\
$-2\sqrt{-7}$&11&120&$C_{11}$\\
\vspace{-12pt} & & &  \\
$13$&169&2210&$\SL(2,13)$\\
\vspace{-12pt} & & &  \\
$6+\sqrt{-7}$&43&1848&$C_{43}$\\

\hline
\end{tabular}
\end{center}
\end{table}

 All $I\in \Pi(H)$ are listed in the column `Prime' 
(each given by a single ideal generator,
 because $\mathcal{O}_7$ is a PID), `Norm' is the size of 
$\mathcal{O}_7/I$, `Index' is 
$|\SL(2, \mathcal{O}_7/I):\varphi_I(H)|$, and `Structure' 
records the isomorphism type of $\varphi_I(H)$ ($C_t$ is
 the cyclic group of order $t$). If $|\G:H|$ were finite,  
then it would be divisible by the least common multiple of all 
entries in the `Index' column.
 
In fact, as we now show, $H$ is thin (cf.~\cite[pp.~84--85]{Hulpke}).
Let $N\leq \G$ be the PCS of level $3\mathcal{O}_7$. Then 
$\G/N \cong \SL(2,9)$, and working from a presentation of $\G$ 
(see \cite[Theorem~13.1]{Swan}), we find that the abelianization 
$N/[N,N]$ of $N$ is free abelian of rank $40$. However, the image of 
$H\cap N$ under natural surjection $N\rightarrow N/[N,N]$ has rank 
$11$. Thus $|N:(H\cap N)| = |HN:H|$ is infinite, and so the index of
$H$ in $\G$ is infinite.

\subsection{Identifying congruence subgroups}

Let $\G = \SL(2, \mathcal{O}_3)$. Note that $\mathcal{O}_3$ 
is a PID, and $\mathcal{O}_3 = \allowbreak \Z[\omega]$
 where $\omega$ is the primitive cube root
of unity $\frac{-1+\sqrt{-3}}{2}$. In this subsection we 
 identify congruence subgroups among the finite-index 
 subgroups of $\G$.

The group $\PSL(2, \mathcal{O}_3)$ differs essentially from 
other Bianchi groups. Indeed, $\PSL(2, \mathcal{O}_3)$
has Serre's property FA, hence is not a free product with 
amalgamation nor an HNN extension~\cite[Corollary 1]{Alperin}, 
and its abelianization is finite~\cite[p.~2939]{Alperin}.
For $d \neq 3$, $\PSL(2, \mathcal{O}_d)$ is a free product with 
amalgamation~\cite[Theorem~6.3.1, p.~162]{Fine}. Furthermore, 
$\PSL(2, \mathcal{O}_3)$ is profinitely rigid; the proof relies 
on strong approximation~\cite[Section 6.2]{ReidAnnMath}.

By \cite[Theorem~6.1]{Swan}, $\Gamma$ has presentation
\[
\langle \hspace{.75pt} t,l,a \, | \, l^3,a^4,
 lala^{-1},a^2ta^{-2}t^{-1},
(t^{-1}a)^3, (tl^{-1})^3, (tl)^3  \hspace{.75pt} \rangle
\]
(with redundant generators eliminated), where
\[
t=\begin{bmatrix}1&1\\0&1\end{bmatrix},\quad
l=\begin{bmatrix}\omega^2&0\\0&\omega
\end{bmatrix},\quad
a=\begin{bmatrix}0&-1\\1&\ \ 0\end{bmatrix}.
\]
\noindent 
We test whether finite-index subgroups of $\G$ are congruence 
 subgroups. A low-index subgroup computation (see 
\cite[Section~5.4]{Handbook}) determines all $126$ proper 
subgroups of $\G$ up to conjugacy that have index at most $30$. 
Eight of these, labeled $L_i$ for $1\leq i \leq 8$ in 
Table~\ref{tab:table2}, are not congruence subgroups. This is 
because the image of $\G$ under the permutation representation 
arising from the action by $\G$ on cosets of each $L_i$ is an 
alternating group $\mathrm{A}_r$ for $r \ge 22$. Any PCS in $L_i$ 
is contained in the kernel of the coset action. 
By the Chinese remainder theorem, we may assume 
that $L_i$ contains a PCS of prime-power level, say $I^j$ 
where $I\subset \mathcal{O}_3$ is maximal. Since the kernel
of reduction modulo $I$ on $\SL(2,\mathcal{O}_3/I^j)$ is 
solvable, $\mathrm{A}_r$ is then a quotient of some 
$\SL(2,p^k)$. But this is impossible (see, e.g., 
\cite[Proposition~10, p.\hspace{1.5pt}333]{LubotzkySegal}).

\begin{table}[h!]
 \begin{center}
 \caption{Non-congruence subgroups of $\SL(2, \mathcal{O}_3)$}
 \label{tab:table2}
 \begin{tabular}{|c|c|c|} \hline
$i$ & $L_i$ & $|\G : L_i|$ \\
 \hline
\vspace{-12pt} & &   \\
1&$\langle \hspace{.75pt} l, a, tl^{-1}t^{-2}, t^{-2}at^{2}a^{-1}t^{-2} \hspace{.75pt} \rangle$& $22$\\
\vspace{-12pt} & &   \\
2&$\langle \hspace{.75pt} l, a, tlt^{-2}, t^{-2}at^{2}a^{-1}t^{-2}  \hspace{.75pt} \rangle$&$22$\\
\vspace{-12pt} & &   \\
3&$\langle \hspace{.75pt} t^{3}, tat^{-1}, lt^{3}l^{-1}t, ltat^{-1}l^{-1} \hspace{.75pt} \rangle$&$27$\\
\vspace{-12pt} & &   \\
4&$\langle \hspace{.75pt} a, t^{3}, lt^{3}l^{-1}t^{-1}, ltat^{-1}l^{-1}  \hspace{.75pt} \rangle$&$27$\\
\vspace{-12pt} & &   \\
5&$\langle \hspace{.75pt} t^{-2}, l, a^{-2}, at^{3}a^{-1}, tlatlt^{-2}a^{-1}t^{-1} \hspace{.75pt} \rangle$&$28$\\
\vspace{-12pt} & &   \\
6&$\langle \hspace{.75pt} l, a, t^{3}, tlt^{-1}atl^{-1}t^{-1}, tat^{-3}a^{-1}t  \hspace{.75pt} \rangle$&$29$\\
\vspace{-12pt} & &   \\
7&$\langle \hspace{.75pt} a, lta^{-1}t^{-1}, l^{-1}t^{-2}l  \hspace{.75pt} \rangle$&$30$\\
\vspace{-12pt} & &   \\
8&$\langle \hspace{.75pt} a, lt^{-2}l^{-1}, l^{-1}ta^{-1}t^{-1}  \hspace{.75pt} \rangle$&$30$\\
                \hline
    \end{tabular}
  \end{center}
\end{table}
\noindent For the groups $H = L_i$ in Table~\ref{tab:table2}
 we have $R_H=\mathrm{Tr}(H)=\mathcal{O}_3$, and after running
${\tt PrimesForDense}$ we find that they surject onto 
$\SL(2, \mathcal{O}_3/I)$ modulo every maximal ideal $I$. 

A further five subgroups $L_i$, as in Table~\ref{tab:table3}, 
surject modulo every maximal ideal $I$ of $\mathcal{O}_3$
onto $\SL(2,\mathcal{O}_3/I)$. 

\begin{table}[h!]
  \begin{center}
    \caption{Congruence subgroups of $\SL(2, \mathcal{O}_3)$}
    \label{tab:table3}
    \begin{tabular}{|c|c|c|}
      \hline
      $i$ & $L_i$ & $|\G : L_i|$ \\
      \hline
			\vspace{-12pt} & &  \\
9&$\langle \hspace{.75pt} l, talt^{-1}, t^{2}lta^{-1}t^{-2} 
 \hspace{.75pt} \rangle$&$16$\\
\vspace{-12pt} & &  \\
10&$\langle \hspace{.75pt} l, tal^{-1}t^{-1}, t^{2}lt^{-1}a^{-1}t^{-2}  
 \hspace{.75pt} \rangle$&$16$\\
\vspace{-12pt} & &  \\
11&$\langle \hspace{.75pt} l, a, tat^{-2}, t^{-4}  
 \hspace{.75pt} \rangle$&$16$\\
\vspace{-12pt} & &  \\
12&$\langle \hspace{.75pt} l, a, tat^{-1}, t^{-4}  
 \hspace{.75pt} \rangle$&$16$\\
\vspace{-12pt} & &  \\
13&$\langle \hspace{.75pt} t,a,lt^{3}l^{-1},l^{-1}tat^{-2}a^{-1}tl^{-1} 
 \hspace{.75pt} \rangle$&$27$\\
                \hline
    \end{tabular}
  \end{center}
\end{table}

If $H$ is any one of $L_9,\ldots , L_{12}$ then 
$R_H= \mathrm{Tr}(H)=\mathcal{O}_3$, while 
$\mathrm{span}_\Z(\tr(L_{13}))= \allowbreak 3\mathcal{O}_3$. 
However, all groups in Table~\ref{tab:table3} are congruence 
subgroups: we verified that $|\varphi_{24}(\Gamma):\varphi_{24}(L_i)|
=|\Gamma:L_i|$ for these $i$, where $\varphi_{24}$ is the congruence
homomorphism modulo $24$, so $\ker \varphi_{24}\leq L_i$.
This does not contradict 
the fact that each $L_i$ surjects onto $\SL(2, \mathcal{O}_3/I)$ 
modulo every maximal ideal $I\subset \mathcal{O}_3$. 
Indeed, for small $n$ and small $R/I$, we know that $\SL(n,R/I)$ has 
supplements to images of principal congruence subgroups; 
cf.~\cite[Section~2.3]{Density}.

All other subgroups (of the $126$) do not surject onto 
$\SL(2,\mathcal{O}_3/I)$ modulo at least one  maximal  ideal $I$. 
Of these, $20$ are not congruence subgroups, as the image of $\G$ 
arising from its action on their cosets does not occur in any 
$\SL(2,p^k)$. These non-congruence subgroups are labeled 
$L_{14},\ldots, L_{33}$ in Table~\ref{Tabe4}. 

\begin{table}[h!]
  \begin{center}
    \caption{More non-congruence subgroups of $\SL(2, \mathcal{O}_3)$}
    \label{Tabe4}
    \begin{tabular}{|c|c|c|c|c|}
      \hline
$i$ & $L_i$ & $|\G : L_i|$&Reason&Prime \\
\hline
\vspace{-12pt} & & & &   \\
14& $\langle \hspace{.75pt} t^{-4}, t^{-1}la^{-1}t^{-1}, lat^{-2} 
\hspace{.75pt}\rangle$&24&RA& $2$ \\
\vspace{-12pt} & & & &   \\
15& $\langle \hspace{.75pt} ta^2, at^2l^{-1} \hspace{.75pt}\rangle$
&24&RA& $2$ \\
\vspace{-12pt} & & & &   \\
16& $\langle \hspace{.75pt} t, at^{-2}l^{-1} \hspace{.75pt}\rangle$
&24& RA& $2$ \\
\vspace{-12pt} & & & &   \\
17& $\langle \hspace{.75pt} a, tal^{-1}t^{-1}, 
 t^{-1}al^{-1}t, lt^{-4}l^{-1} \hspace{.75pt}\rangle$
&24& RA& $2$ \\
\vspace{-12pt} & & & &   \\
18& $\langle \hspace{.75pt} l, a, t^3, tlat^{-2}a^{-1}t \hspace{.75pt}\rangle$
&24& RB& $\frac{-3+\sqrt{-3}}{2}$ \\
\vspace{-12pt} & & & &   \\
19& $\langle \hspace{.75pt} l, a, tl^{-1}atl^{-1}t^{-1} \hspace{.75pt}\rangle$
&28& RC& $\frac{5+\sqrt{-3}}{2}$ \\
\vspace{-12pt} & & & &   \\
20& $\langle \hspace{.75pt} l, a, tl^{-1}t^{-1}a^{-1}l^{-1}t^{-1}\rangle$ &28& RC&
$\frac{-5+\sqrt{-3}}{2}$ \\
\vspace{-12pt} & & & &   \\
21& $\langle \hspace{.75pt} l, a, talt^{-1}, t^2al^{-1}t^{-2}, t^4lt^{-2}\rangle$ &28& RC&
$\frac{-5+\sqrt{-3}}{2}$ \\
\vspace{-12pt} & & & &   \\
22& $\langle \hspace{.75pt} l, talt^{-1}, t^{-2}alt^2, t^4l^{-1}t^{-2}\rangle$ &28& RC&
$\frac{5+\sqrt{-3}}{2}$ \\
\vspace{-12pt} & & & &   \\
23& $\langle \hspace{.75pt} l, tal^{-1}t^{-1}, t^{-2}al^{-1}t^2, 
t^4lt^{-2} \hspace{.75pt}\rangle$&28& RC&
$\frac{-5+\sqrt{-3}}{2}$ \\
\vspace{-12pt} & & & &   \\
24& $\langle \hspace{.75pt} l, a, tl^{-1}t^{-1}a^{-1}t \hspace{.75pt}\rangle$
&28& RC& $\frac{5+\sqrt{-3}}{2}$ \\
\vspace{-12pt} & & & &   \\
25& $\langle \hspace{.75pt} l, a, tlt^{-1}a^{-1}t \hspace{.75pt}\rangle$
&28& RC& $\frac{-5+\sqrt{-3}}{2}$ \\
\vspace{-12pt} & & & &   \\
26& $\langle \hspace{.75pt} l, a, tal^{-1}t^{-1}, 
t^2alt^{-2}, t^4l^{-1}t^{-2} \hspace{.75pt}\rangle$&28& RC&
$\frac{5+\sqrt{-3}}{2}$ \\
\vspace{-12pt} & & & &   \\
27& $\langle \hspace{.75pt} l, a, t^2lt^{-2}, 
tl^{-1}at^{-2}a^{-1}lt^{-1} \hspace{.75pt}\rangle$&30& RD& $2$ \\
\vspace{-12pt} & & & &   \\
28& $\langle \hspace{.75pt} l, a, t^2lt^{-2}, 
tlat^{-2}a^{-1}l^{-1}t^{-1} \hspace{.75pt}\rangle$&30& RD& $2$ \\
\vspace{-12pt} & & & &   \\
29& $\langle \hspace{.75pt} a, tal^{-1}t^{-1}, 
t^{-1}al^{-1}t, t^{-1}lt^4l^{-1}t^{-1} \hspace{.75pt}\rangle$&30& RE&
$2$ \\
\vspace{-12pt} & & & &   \\
30& $\langle \hspace{.75pt} a, t^{-4}, ltl^{-1}t^{-1}, 
t^{-1}at^2a^{-1}t^{-1} \hspace{.75pt}\rangle$&30& RF& $2$ \\
\vspace{-12pt} & & & &   \\
31& $\langle \hspace{.75pt} a, t^{-4}, l^{-1}tlt^{-1},
t^{-1}at^2a^{-1}t^{-1} \hspace{.75pt}\rangle$&30& RF& $2$ \\
\vspace{-12pt} & & & &   \\
32& $\langle \hspace{.75pt} t, lt^{-2}l^{-1}, at^{-2}a^{-1},
 lat^2lt^{-1}a^{-1} \hspace{.75pt}\rangle$&30& RG& $2$
\\
\vspace{-12pt} & & & &   \\
33& $\langle \hspace{.75pt} t, a, lt^2at^2l, lt^{-6}l^{-1}, 
lt^{-2}at^{-2}l \hspace{.75pt}\rangle$&30& RD& $2$ \\
            \hline
    \end{tabular}
  \end{center}
\end{table}

For each $L_i$ we provide a reason related 
to $\Gamma$-action on the cosets of $L_i$ (column `Reason'), and 
a maximal ideal $I$ modulo which $L_i$ does not surject
onto  $\SL(2,\mathcal{O}_3/I)$ (column `Prime').
Structures of actions on cosets---that is, of the image of $\Gamma$
in $\mathrm{Sym}(|\Gamma:L_i|)$ under the permutation representation
arising from this action---are as follows (using
ATLAS~\cite{ATLAS} notation), with RA for `Reason A', etc. 
All factors $p^e$, $p$ prime, are minimal normal.
These structures cannot occur as quotients of any 
$\SL(2,\mathcal{O}/I)$. 

\medskip

RA: $2.2^4.2.2^4.\mathrm{A}_5$ (here $\SL(2,\mathcal{O}/I)$
 has quotients whose chief factor structure is the same, but 

they are not isomorphic to this group).

\vspace{3pt}

RB: $2^{12}.3^3.2^2.3$.

\vspace{3pt}

RC: $2^8\rtimes L_3(2)$.

\vspace{3pt}

RD: $3^5\rtimes \mathrm{A}_5$.

\vspace{3pt}

RE: $5^5\rtimes \mathrm{A}_5$.

\vspace{3pt}

RF: $\mathrm{A}_5^6.2.2^4.\mathrm{A}_5$.

\vspace{3pt}

RG: $3^{10}.2.2^4.\mathrm{A}_5$.

\medskip

The remaining $93$ subgroups of $\G$ of index at most $30$ 
are congruence subgroups. We verified this by showing that 
each subgroup contains a PCS of level $I$ for $I$ lying over various 
products of primes in $\{2,3,5,7,13,19\}$ (computed using 
 $\tt PrimesForDense$ and  $\tt PrimesForSubfields$); these 
being ideals $I$ modulo which some of the subgroups did not 
surject onto  $\SL(2,\mathcal{O}_3/I)$. 

\subsection{Finite-index normal subgroups of $\SL(2,\mathcal{O}_3)$}

In \cite{Alperin}, all normal subgroups of $\PSL(2, \mathcal{O}_3)$
of index less than $960$ are shown to be congruence subgroups; i.e., 
the preimage of each in $\G=\SL(2, \mathcal{O}_3)$ is a congruence 
subgroup. Using the ${\sf GAP}$ package \cite{LINS},
we calculated all $N\lhd \allowbreak \G$ of index up to $20000$ 
containing the center $\langle -1_2\rangle$ of $\Gamma$:
see Table~\ref{GammaMore}.

\begin{table}[h]
\begin{center}
\caption{Normal subgroups $N$ of $\Gamma$ of index less than $20000$}
\label{GammaMore}

\vspace{-10pt}

\begin{tiny}
\[
\begin{array}{c@{|}cccccc}
\mbox{Index}& 3& 12& 60& 168& 168& 180\\
\hline
\vspace{-6.5pt} & & & & & & \\
N&\Gamma^{(1)}&\Gamma^{(2)}&\Gamma(2)&\Gamma(2+\omega)
&\Gamma(3-\omega)&\Gamma^{(1)}\cap\Gamma(2)\\
\multicolumn{2}{c}{}\\
\mbox{Index} & 324& 504 & 504& 720& 960& 1092\\
\hline
\vspace{-6.5pt} & & & & & & \\
N &\Gamma^{(3)}=\Gamma(3)&\Gamma^{(1)}\cap\Gamma(2+\omega)
&\Gamma^{(1)}\cap\Gamma(3-\omega)&\Gamma^{(2)}\cap\Gamma(2)&H_1&\Gamma(3+\omega)\\
\multicolumn{2}{c}{}\\
\mbox{Index}&1092 & 1920& 2016& 2016 & 2880& 3276\\
\hline
\vspace{-6.5pt} & & & & & & \\
N
&\Gamma(1+3\omega)&\Gamma(4)&\Gamma^{(2)}\cap\Gamma(2+\omega)&\Gamma^{(2)}\cap\Gamma(3-\omega)
&\Gamma^{(1)}\cap\Gamma(2)\cap H_1& \Gamma^{(1)}\cap\Gamma(3+\omega)\\
\multicolumn{2}{c}{}\\\
\mbox{Index}
& 3276& 3420& 3420& 3840& 4032& 4032\\
\hline
\vspace{-6.5pt} & & & & & & \\
N
& \, \Gamma^{(1)}\cap\Gamma(1+3\omega)&\Gamma(3+2\omega)&\Gamma(2+3\omega)&H_2&H_3&H_4\\
\multicolumn{2}{c}{}\\\
\mbox{Index} & 5760& 7800& 8748& 10080& 10080& 10260\\
\hline
\vspace{-6.5pt} & & & & & & \\
N& \, \Gamma^{(1)}\cap\Gamma(4)&\Gamma(5)&\Gamma^{(4)}&\Gamma(2)\cap\Gamma(2+\omega)&
\Gamma(2)\cap\Gamma(3-\omega)&\Gamma^{(1)}\cap\Gamma(3+2\omega)
\\
\multicolumn{2}{c}{}\\\
\mbox{Index}
& 10260& 11520 & 11520& 13104& 13104& 14580\\
\hline
\vspace{-6.5pt} & & & & & & \\
N
&\Gamma^{(1)}\cap\Gamma(2+3\omega)&\Gamma^{(2)}\cap
H_1&\Gamma^{(1)}\cap\Gamma(4)\cap H_2&
\Gamma^{(2)}\cap\Gamma(3+\omega)&\Gamma^{(2)}\cap\Gamma(1+3\omega)&H_5\\
\multicolumn{2}{c}{}\\\
\mbox{Index} & 14880& 14880& 19440\\
\hline
\vspace{-6.5pt} & & & & & & \\
N
&\Gamma(5+\omega)&\Gamma(1+5\omega)&
\Gamma^{(2)}\cap\Gamma(2)\cap\Gamma(3)\\
\end{array}
\]
\end{tiny}
\end{center}
\end{table}

We use the notation of \cite{Alperin}; $\G(a)$ is the  
PCS of level $a\mathcal{O}_3$,
and $\G^{(i)}$ is the $i$th term of the derived series 
of $\G$. By happenstance $\Gamma(3)=\Gamma^{(3)}$, and thus 
$\G^{(i)}$ for $i\leq 3$ is a congruence subgroup.
Also, the intersection of two congruence subgroups is a 
congruence subgroup. The other normal subgroups $H_i$ are 
as below.

\begin{itemize}
\item
 $H_1= \big\langle \Gamma(4),
{\small \begin{bmatrix}
\omega-5\omega^2&8\omega^2\\
\vspace{-10pt}&\\
-4\omega&5\omega-\omega^2\end{bmatrix}}
\big\rangle$, i.e., $H_1/\Gamma(4)$
is the center of $\Gamma/\Gamma(4)$.
This is visibly a congruence subgroup.
\item $H_2$ is a subgroup of $\Gamma(4)$ of index $2$. 
It is the kernel of the action of $\Gamma$ on the cosets 
of
\[
\langle \hspace{.75pt} tlt, \hspace{.5pt} t^4, 
\hspace{.5pt} 
t^2at^{-2}, \hspace{.5pt} 
l^{-1}at^2a^{-1}l^{-1}, \hspace{.5pt}  
l^{-1}at^{-2}a^{-1}l^{-1},
\hspace{.5pt} 
tat^3a^{-1}t, 
\hspace{.5pt} 
 l^{-1}tat^{-2}a^{-1}tl^{-1}  \hspace{.75pt} \rangle .
\]
Modulo all prime ideals $I$ 
except $2\mathcal{O}_3$, $H_2$ 
surjects onto  $\SL(2,\mathcal{O}_3/I)$.
\item $H_3$ is a subgroup of 
$\Gamma^{(2)}\cap\Gamma(2+\omega)$
 of index $2$, and $H_4$ is a subgroup 
of the Galois conjugate
 $\Gamma^{(2)}\cap\Gamma(3-\omega)$
of index $2$. 
Neither $H_3$ nor $H_4$ surject modulo 
prime ideals dividing $3$ and $7$. Both contain
$\Gamma(7)\cap\Gamma(9)$,
and so they are congruence subgroups.
\item
$H_5$ is the kernel of the action by $\Gamma$ on the cosets 
of the subgroup $L_{27}$ (or  $L_{28}$ or  $L_{33}$) in 
Table~\ref{Tabe4}. Thus $H_5$ is not a congruence subgroup.
\end{itemize}

Succinctly: up to index $1000$, our list 
agrees with  that of \cite{Alperin}  (apart from 
$\Gamma^{(2)}\cap\Gamma(2)$, missing in \cite{Alperin});
whereas at larger indices, we readily discovered
 finite-index normal subgroups of $\G$ that are not 
congruence subgroups.

\subsection*{Acknowledgments}
We thank the International Centre for Mathematical Sciences
and Mathematisches Forschungsinstitut Oberwolfach 
 for hosting visits during which this paper was 
written. The third author has been supported in part by
Simons Foundation Grant~852063, which is gratefully 
acknowledged. The first and second authors are grateful for
the hospitality of the Department of Mathematics at 
Colorado State University.

\bibliographystyle{amsplain}

\begin{thebibliography}{10}

\bibitem{Alperin}
R.~C.~Alperin, Normal subgroups of 
$\mathrm{PSL}_2(\Z[\sqrt{-3}\hspace{.75pt}])$, 
\emph{Proc. Amer. Math. Soc.} \textbf{124} (1996), no.~10, 2935--2941. 

\bibitem{Aschbacher84}
M.~Aschbacher,
 On the maximal subgroups of the finite classical groups,
 {\em Invent. Math.} {\bf 76} (1984), no.~3, 469--514.

\bibitem{Brayetal}
J.~N.~Bray, D.~F.~Holt, and C.~M.~Roney-Dougal, 
\emph{The maximal subgroups of the low-dimensional finite classical 
groups}, London Math. Soc. Lecture Note Ser. \textbf{407}, 
Cambridge University Press, Cambridge, 2013.

\bibitem{Bridson}
M. R. Bridson, D. M. Evans, M. W. Liebeck, D. Segal,
Algorithms determining finite simple images of finitely presented groups,
 Invent. Math. \textbf{218} (2019), 623--648. 

\bibitem{ReidAnnMath}
M. R. Bridson, D. B. McReynolds, A. W. Reid, and R. Spitler,
Absolute profinite rigidity and hyperbolic geometry, Ann. of Math. (2) 
\textbf{192} (2020), no.~3, 679--719.

\bibitem{ATLAS}
J.~H.~Conway, R.~T.~Curtis, S.~P.~Norton, R.~A.~Parker, and
 R.~A.~Wilson. {\em {$\Bbb{ATLAS}$} of finite groups},
 Oxford University Press, Eynsham, 1985.

\bibitem{SLnO}
A.~S.~Detinko, D.~L.~Flannery, and A.~Hulpke, Computing congruence
quotients of Zariski dense matrix groups over number fields, preprint, 2026.

\bibitem{ExpMath}
A.~S.~Detinko, D.~L.~Flannery, and A.~Hulpke, 
Algorithms for experimenting with Zariski dense subgroups, 
\emph{Exp. Math.} \textbf{29} (2020), no. 3, 296--305.

\bibitem{SAT}
A.~S.~Detinko, D.~L.~Flannery, and A.~Hulpke, 
 The strong approximation theorem and computing with 
linear groups,  \emph{J.~Algebra} \textbf{529} (2019), 536--549.

\bibitem{Density}
A.~S.~Detinko, D.~L.~Flannery, and A.~Hulpke, Zariski density and
 computing in arithmetic groups, \emph{Math. Comp.} \textbf{87} 
(2018), no.~310, 967--986. 

\bibitem{Recog}
A.~S.~Detinko, D.~L.~Flannery, and E.~A.~O'Brien, 
 Recognizing finite matrix groups  over infinite fields, 
\emph{J. Symbolic Comput.} \textbf{50} (2013), 100--109.

\bibitem{TitsDFO}
A.~S.~Detinko, D.~L.~Flannery, and E.~A.~O'Brien,
 Algorithms for the Tits alternative and related problems,
{\em J. Algebra} \textbf{344} (2011), 397--406.

\bibitem{Fine}
B.~Fine,  \emph{Algebraic theory of the Bianchi groups}, 
Monogr. Textbooks Pure Appl. Math. \textbf{129},
Marcel Dekker, Inc., New York, 1989.

\bibitem{ZalesskiPreprint}
D.~L.~Flannery and A.~E.~Zalesski, Prime degree irreducible 
representations of simple algebraic groups and finite simple groups 
of Lie type, 2026 (submitted).

\bibitem{Gap}
The~GAP Group, \emph{{GAP} -- {G}roups, {A}lgorithms, 
and {P}rogramming}, \url{http://www.gapsystem.org}.

\bibitem{LINS} The GAP package LINS,
\url{https://github.com/gap-packages/LINS}.

\bibitem{Handbook} 
D.~F.~Holt, B.~Eick, and E.~A.~O'Brien,
\emph{Handbook of computational group theory},
Chapman $\&$ Hall/CRC, Boca Raton, FL, 2005.

\bibitem{Hulpke} A.~Hulpke,
Proving infinite index for a subgroup of matrices.
 In {\em Computational {A}spects  of {D}iscrete {S}ubgroups
 of {L}ie {G}roups}, volume 783 of {\em Contemp. Math.}, 
	pages 83--85. American Mathematical Society, Providence,
	RI, 2023.

\bibitem{IsaacsCT}
I.~M.~Isaacs, {\em Character theory of finite groups},
Academic Press, New York-London, 1976.

\bibitem{Koch}
H.~Koch, \emph{Number theory, Algebraic numbers and functions},
Grad. Stud. Math. \textbf{24}, American Mathematical Society, 
Providence, RI, 2000.

\bibitem{Lubeck}  F. L\"ubeck, Small degree representations of 
finite Chevalley groups in defining characteristic,   
LMS J. Comput. Math. \textbf{4} (2001), 135--169.

\bibitem{Lubeck2}  F. L\"ubeck, Tables of weight multiplicities 
of small degree representations in defining characteristic, 
\url{https://www.math.rwth-aachen.de/~Frank.Luebeck/chev/WMSmall/index.html}

\bibitem{LubotzkySegal}
A.~Lubotzky and D.~Segal, \emph{Subgroup growth}, 
Birkh\"{a}user Verlag, Basel, 2003.

\bibitem{NSrecog}
M.~Neunh\"{o}ffer, \'{A}.~Seress, et al., The GAP package recog, 
\url{https://gap-packages.github.io/recog/}

\bibitem{RapinchukSAT}
A.~S.~Rapinchuk, Strong approximation for algebraic groups,
in {\em Thin groups and superstrong approximation}, 
volume~61 of {\em Math. Sci. Res. Inst. Publ.}, 
pages 269--298. Cambridge University Press, Cambridge, 2014.

\bibitem{Rivin}
I.~Rivin, Large Galois groups with applications to Zariski density,
\url{https://arxiv.org/abs/1312.3009}

\bibitem{SeitzZalesski}
G.~M. Seitz and A.~E. Zalesski,  On the minimal degrees of 
projective representations of the finite
  Chevalley groups, II.
 {\em J. Algebra} \textbf{158} (1993), 233--243.

\bibitem{Serre}
J.-P.~Serre,
Le probl\`{e}me des groupes de congruence pour $\mathrm{SL}_2$,
{\em Ann. of Math.} \textbf{92} (1970), no.~3, 489--527.

\bibitem{Swan}
R.~G.~Swan,  Generators and relations for certain special linear groups,
\emph{Advances in Math.} 
\textbf{6} (1971), 1--77.

\bibitem{Wehrfritz}
B.~A.~F.~Wehrfritz, \emph{Infinite linear groups}, 
Springer-Verlag, New York, 1973.

\bibitem{Weis}
B.~Weisfeiler,
Strong approximation for Zariski-dense subgroups of semisimple 
algebraic groups, \emph{Ann. of Math. (2)} \textbf{120} (1984), 
no.~2, 271--315.


\end{thebibliography}

\end{document}